\documentclass[12pt,reqno]{amsart}
\usepackage[T2A]{fontenc}
\usepackage[english]{babel}
\usepackage{amsaddr}
\usepackage{amsmath}
\usepackage{amssymb}
\usepackage{amsfonts}
\usepackage{epsfig}
\usepackage{srcltx}
\usepackage{subfigure}
\usepackage{float}
\usepackage{xcolor}
\usepackage{mathtools}
\usepackage[a4paper, mag=1000, includefoot, left=2cm, right=2cm, top=2cm, bottom=2cm, headsep=1cm, footskip=1cm]{geometry}
   \usepackage[unicode,colorlinks]{hyperref}
     \hypersetup{colorlinks=true, citecolor=blue, linkcolor=blue}

\newtheorem{Th}{Theorem}
\newtheorem{Lem}{Lemma}

\begin{document}

\thispagestyle{empty}

\title[]{Resonance in coupled nonlinear oscillators with decaying perturbations}

\author[O.A. Sultanov]{Oskar A. Sultanov}

\address{
Institute of Mathematics, Ufa Federal Research Centre, Russian Academy of Sciences, Chernyshevsky street, 112, Ufa 450008 Russia; \\
Bashkir State Pedagogical University n. a. M. Akmulla, Oktyabr'skoi Revolyutsii street, 3a, Ufa 450000 Russia.}
\email{oasultanov@gmail.com}


\maketitle

{\small
\begin{quote}
\noindent{\bf Abstract.} The influence of nonautonomous perturbations on a system of two coupled nonlinear non-identical oscillators is studied. Both the coupling intensity and the perturbation strength are assumed to decay in time according to a power law. We focus on resonance effects associated with the commensurability of the natural frequencies of the oscillators at some energy levels. In particular, we describe the conditions on perturbations and coupling parameters under which phase-locked solutions appear with the oscillator energies remaining asymptotically close to resonant levels. By combining the averaging method with the construction of Lyapunov functions, we derive a nonautonomous model system governing the perturbed dynamics and analyse the stability of resonant solutions in the phase-locking regime. The theoretical results are illustrated for a system of non-identical Duffing oscillators with decaying coupling.

\medskip

\noindent{\bf Keywords: }{nonlinear oscillators, coupled systems, resonance, phase-locking, decaying perturbations, averaging, Lyapunov functions.}

\medskip
\noindent{\bf Mathematics Subject Classification: }{34C15, 34C29, 34D20, 34E10}

\end{quote}
}
{\small

\section*{Introduction}

Resonance plays a significant role in a wide range of applications, including mechanics, optics, engineering structures, and biological systems~\cite{BII88,SUZ88,AF06,LF09,RS16,MKSS18}. Such phenomena arise from the interaction between oscillators and external perturbations or from the coupling between subsystems. Depending on the system parameters and forcing conditions, they may lead either to an amplification of oscillations and a loss of stability or to the emergence of stable regimes. In the latter case, resonance can be used for efficient energy transfer, stabilization, synchronization, or the control of dynamics.

In nonlinear dynamics, the study of resonance phenomena is of great importance. In contrast to the linear case, where resonance is usually associated with the coincidence of natural and excitation frequencies, in nonlinear systems, the frequencies depend on the energy. In this case, resonant relations identify the energy levels where special asymptotic regimes may arise~\cite{BM61,BVC79,AKN06}. The analysis of such modes is essential for understanding the mechanisms responsible for the emergence of stable long-term dynamics.

In this paper, a system of two non-identical coupled nonlinear oscillators subject to nonautonomous perturbations is considered. Both the coupling intensity and the perturbations strength are assumed to decay in time. In this case, the considered system belongs to a class of asymptotically autonomous systems whose limiting dynamics is governed by the unperturbed oscillators. Note that perturbations with decaying intensity occur in a wide range of applications, including equations with time-dependent damping~\cite{JM23}, systems with vanishing control~\cite{AT18,OS20SAPM}, epidemic~\cite{CCT95} and cosmological~\cite{MLP26} models, and stellar~\cite{NY89} and DNA~\cite{SFR19} dynamics. Moreover, qualitative and asymptotic properties of such systems have been extensively studied. In particular, it is known that decaying additives may either preserve the limiting dynamics~\cite{LM56,LDP74} or lead to new stable states~\cite{HRT94,OS22Non,OS21IJBC}. The aim of this paper is to investigate whether such disturbances can induce resonant regimes with energies stabilized near resonant values and phases synchronized with a constant shift. Similar problems with small autonomous perturbations were considered in~\cite{KST09,LK18}, where synchronization and resonance capture were investigated. However, in this paper, the presence of a small parameter is not assumed, and the influence of decaying nonautonomous additives is discussed. 

Nonlinear resonance in planar oscillatory systems subject to decaying perturbations was previously studied in~\cite{OS25DCDS}, where it was shown that stable regimes with asymptotically constant amplitude may arise. Asymptotic analysis of the related linear and nonlinear problems was discussed in~\cite{BN10,OS23JMS}. In the case of two non-identical coupled oscillators, the situation becomes more complex. The perturbed nonautonomous system is four-dimensional with resonant dynamics determined by the joint evolution of two energies and two phases. Therefore, the methods developed for planar systems cannot be directly applied. The study of this case is the subject of the present paper. Let us also mention papers where related problems were considered. In particular, the asymptotics of solutions to systems of coupled linear oscillators with decaying coupling were constructed in~\cite{PNN13}, the persistence of invariant tori in Hamiltonian systems with rapidly decaying perturbations was discussed in~\cite{IP12,DS24}, and the stability of an equilibrium in a system of coupled weakly nonlinear oscillators with disturbances integrable on the semi-axis was analyzed in~\cite{MV23}. The present paper studies the existence and stability of resonant regimes in systems of coupled nonlinear oscillators with slowly decaying parametric perturbations.

The paper is organized as follows. Section~\ref{sec1} formulates the problem and gives motivating examples. The main results are presented in Section~\ref{sec2}, while the proofs are given in the subsequent sections. In particular, Section~\ref{sec3} is devoted to the construction of a change of variables that averages the leading asymptotic terms of the perturbed system. Section~\ref{sec4} contains the stability analysis of the corresponding truncated system. In Section~\ref{sec5}, a model system obtained from the averaged equations by dropping the remainder terms is studied, and solutions associated with an equilibrium of the truncated system are analyzed. In Section~\ref{sec6}, the persistence of these regimes in the full system is established by constructing suitable Lyapunov functions. In Section~\ref{sex}, the proposed theory is applied to non-identical Duffing oscillators with decaying coupling and perturbations. The paper concludes with a brief discussion of the results.

\section{Problem statement}\label{sec1}
Consider a system of four differential equations
\begin{gather}\label{PS}
\begin{split}
&\frac{dE_1}{dt}=f_1(E_1,E_2,\varphi_1,\varphi_2,t), 
\quad 
 \frac{d\varphi_1}{dt}-\omega_1(E_1) = g_1(E_1,E_2,\varphi_1,\varphi_2,t), \\
&\frac{dE_2}{dt}=f_2(E_1,E_2,\varphi_1,\varphi_2,t), 
\quad 
 \frac{d\varphi_2}{dt}-\omega_2(E_2) = g_2(E_1,E_2,\varphi_1,\varphi_2,t), 
\end{split}
\end{gather}
where $\omega_l(E_l)>0$, and the functions $f_l(E_1,E_2,\varphi_1,\varphi_2,t)$ and $g_l(E_1,E_2,\varphi_1,\varphi_2,t)$ are defined for all $E_1,E_2\in [0,\mathfrak E]$, with $\mathfrak E={\hbox{\rm const}}$, and $\varphi_1,\varphi_2\in\mathbb R$, $t\geq t_0\geq 1$. These functions are infinitely differentiable for $E_1,E_2\in (0,\mathfrak E]$ and are $2\pi$-periodic with respect to $\varphi_1$ and $\varphi_2$. The functions $f_i$ and $g_i$ play the role of perturbations of the autonomous system
\begin{gather}\label{US}
\frac{d\hat E_l}{dt}=0, \quad \frac{d\hat \varphi_l}{dt}=\omega_l(\hat E_l), \quad l\in\{1,2\},
\end{gather}
describing two independent non-isochronous oscillators with natural frequencies $\omega_1(\hat E_1)$ and $\omega_2(\hat E_2)$. The solutions $E_l(t)$ and $\varphi_l(t)$ of system \eqref{PS} correspond to the energy and the phase of the perturbed oscillations. 

It is assumed that the intensity of perturbations decays with time and the following asymptotic expansions hold
\begin{gather}\label{fgas}\begin{split}
&f_l(E_1,E_2,\varphi_1,\varphi_2,t)\sim  \sum_{k=1}^\infty t^{-k\alpha} f_{l,k}(E_1,E_2,\varphi_1,\varphi_2), \\ 
& g_l(E_1,E_2,\varphi_1,\varphi_2,t)\sim  \sum_{k=1}^\infty t^{-k\alpha} g_{l,k}(E_1,E_2,\varphi_1,\varphi_2)
\end{split}
\end{gather}
as $t\to\infty$, where the coefficients $f_{l,k}(E_1,E_2,\varphi_1,\varphi_2)$ and $g_{l,k}(E_1,E_2,\varphi_1,\varphi_2)$ are $2\pi$-periodic with respect to $\varphi_1$ and $\varphi_2$, and $\alpha\in (0,1]$. Note that the series in \eqref{fgas} are asymptotic as $t\to\infty$ such that for all $N> 1$ the following estimates hold: $f_l(E_1,E_2,\varphi_1,\varphi_2,t)-\sum_{k=1}^{N-1}t^{-k\alpha} f_{l,k}(E_1,E_2,\varphi_1,\varphi_2)=\mathcal O(t^{-N\alpha})$ and $g_l(E_1,E_2,\varphi_1,\varphi_2,t)-\sum_{k=1}^{N-1}t^{-k\alpha} g_{l,k}(E_1,E_2,\varphi_1,\varphi_2)=\mathcal O(t^{-N\alpha})$ as $t\to\infty$ uniformly for all $E_1,E_2\in [0,\mathfrak E)$ and $\varphi_1, \varphi_2\in\mathbb R$. Moreover, the expansions are assumed to be differentiable term-wise with respect to $E_i$, $\varphi_i$, and $t$, with the differentiated remainders satisfying analogous uniform estimates. In this case, the perturbed system \eqref{PS} is asymptotically autonomous, with its limiting system given by \eqref{US}. Thus, the functions $f_l$ and $g_l$ describe both the coupling between the oscillators and the parametric disturbances of their dynamics. In what follows, we refer to these functions collectively as perturbation terms. 

The goal of the paper is to study the effects of perturbations on the dynamics near the resonant values $A_1,A_2\in [0,\mathfrak E]$ such that 
\begin{gather}\label{rc}
	\varkappa\omega_1(A_1)=\kappa\omega_2(A_2), \quad \eta_i:=\omega_i'(A_i)\neq 0
\end{gather}
for some coprime integers $\varkappa,\kappa\in\mathbb Z\setminus\{0\}$. Consider the case where $A_1=0$ and $A:=A_2\neq 0$.
 
Let us specify the asymptotic behaviour of perturbations $f_1$ and $g_1$ in the vicinity of $E_1=A_1=0$:
\begin{gather}\label{as0}
f_1(E_1,E_2,\varphi_1,\varphi_2,t)=\mathcal O(E_1), \quad 
g_1(E_1,E_2,\varphi_1,\varphi_2,t)=\mathcal O(1)
\end{gather}
as $E_1\to 0$ uniformly for all $E_2\in [0,\mathfrak E]$, $\varphi_1,\varphi_2\in\mathbb R$, and $t\geq t_0$. Moreover, it is assumed that 
\begin{gather}\label{fgasik}
\begin{split}
	f_{l,k}(E_1,E_2,\varphi_1,\varphi_2)&\sim \sum_{i=0}^\infty E_1^{\frac{i}{2}}f_{l,k,i}(E_2,\varphi_1,\varphi_2), \\
	g_{l,k}(E_1,E_2,\varphi_1,\varphi_2)&\sim \sum_{i=0}^\infty E_1^{\frac{i}{2}}g_{l,k,i}(E_2,\varphi_1,\varphi_2),
\end{split}
\end{gather}
as $E_1\to 0$ uniformly for all $E_2\in [0,\mathfrak E]$, $\varphi_1$, $\varphi_2\in\mathbb R$, $l\in\{1,2\}$, where $f_{1,k,0}\equiv f_{1,k,1}\equiv 0$. 
Note that condition \eqref{as0} guarantees that the perturbations preserve the equilibrium of the first oscillator. Consider the example given by the following system of two coupled oscillators
\begin{gather}\label{Ex0}
\begin{split}
\frac{d^2x_1}{dt^2}+ U'_1(x_1) & = t^{-\alpha} \mathcal G_1\left(x_1,\frac{dx_1}{dt},x_2,\frac{dx_2}{dt}\right), \\
\frac{d^2x_2}{dt^2}+U'_2(x_2) & = t^{-\alpha} \mathcal G_2\left(x_1,\frac{dx_1}{dt},x_2,\frac{dx_2}{dt}\right),
\end{split}
\end{gather}
where $\alpha>0$, $U'_i(x)=w_i^2 x+u_i x^3+\mathcal O(x^5)$ as $x\to 0$, $w_i,u_i={\hbox{\rm const}}$, $w_i>0$, $\mathcal G_1(x_1,z_1,x_2,z_2)= x_1 \mathcal G_{1,1}(x_2,z_2)+z_1 \mathcal G_{1,2}(x_2,z_2)+\mathcal O(x_1^2+z_1^2)$ and $\mathcal G_2(x_1,z_1,x_2,z_2)=\mathcal O(1)$ as $\sqrt{x_1^2+z_1^2}\to 0$. Assume that there exist $\mathfrak E>0$ and $\mathfrak R>0$ such that for all $i\in\{1,2\}$ the level lines $\{(x,y)\in\mathbb R^2: 2U_i(x)+w_i^2 y^2=2E\}$ lying in the domain $\{(x,y): \sqrt{x^2+y^2}\leq \mathfrak R\}$ are closed curves for all $R\in [0,\mathfrak R]$ and do not contain any fixed points of the system
\begin{gather}\label{ExLim}
\frac{d\hat x}{dt}=w_i\hat y, \quad \frac{d\hat y}{dt}=-\frac{U_i'(\hat x)}{w_i},
\end{gather}
other than $(0,0)$. Let $x_i^-(E)<0<x_i^+(E)$ be the solutions of the equation $U_i(x)=E$ such that $U_i'(x_i^\pm(E))\neq 0$ with $E\in (0,\mathfrak E]$. Then, to each closed curve there correspond a periodic solution $x_i^0(t,E)$, $y_i^0(t,E)$ of \eqref{ExLim} with a period
\begin{gather*}
	T_i(E)=\int\limits_{x_i^-(E)}^{x_i^+(E)} \frac{\sqrt 2\,d\varsigma}{\sqrt{E-U_i(\varsigma)}}.
\end{gather*}
Define $\omega_i(E)\equiv 2\pi/T_i(E)$ and auxiliary $2\pi$-periodic functions 
\begin{gather*}
X_i(\varphi,E)\equiv \hat x_i^0 \left(\frac{\varphi}{\omega_i(E)},E\right), \quad
Y_i(\varphi,E)\equiv \hat y_i^0 \left(\frac{\varphi}{\omega_i(E)},E\right).
\end{gather*}
It can easily be checked that
\begin{gather*}
\omega_i(E)\partial_\varphi X_i=w_i Y_i, \quad \omega_i(E)\partial_\varphi Y_i=-
\frac{U_i'(X_i)}{w_i}, \\
2 U_i(X_i)+ w_i^2 Y_i^2 \equiv 2 E, \quad 
 {\hbox{\rm det}}\frac{\partial (X_i,Y_i)}{\partial (\varphi,E)}\equiv \begin{vmatrix} \partial_\varphi X_i & \partial_\varphi Y_i\\ \partial_E X_i & \partial_E Y_i\end{vmatrix} \equiv \frac{1}{w_i\omega_i(E)}.
\end{gather*}
Hence, the transformation $x_1=X_1(\varphi_1,E_1)$, $y_1=Y_1(\varphi_1,E_1)$, $x_2=X_2(\varphi_2,E_2)$, $y_2=Y_2(\varphi_2,E_2)$ is invertible for all $E_1,E_2\in (0,\mathfrak E]$ and $\varphi_1,\varphi_2\in[0,2\pi)$.
Thus, system \eqref{Ex0} in the variables $(E_1,E_2,\varphi_1,\varphi_2)$ takes the form \eqref{PS} with 
\begin{gather}\label{fgl}
\begin{split}
f_l(E_1,E_2,\varphi_1,\varphi_2,t) & \equiv t^{-\alpha}  w_l Y_l(\varphi_l,E_l)\mathcal G_l\left(X_1,w_1 Y_1,X_2,w_2 Y_2\right), \\
g_l(E_1,E_2,\varphi_1,\varphi_2,t) & \equiv -t^{-\alpha} 
 \omega_l(E_l) \partial_{E_l}X_l(\varphi_l,E_l)\mathcal  G_l\left(X_1,w_1 Y_1,X_2,w_2 Y_2\right).
\end{split}
\end{gather}
Since $X_1(\varphi_1,E_1)=\sqrt{2E_1/w_1^2}\cos\varphi_1+\mathcal O(E_1)$, $Y_1(\varphi_1,E_1)=-\sqrt{2E_1/w_1^2}\sin\varphi_1+\mathcal O(E_1)$, $\omega_1(E_1)=w_1+\mathcal O(E_1)$ as $E_1\to 0$, we see that 
\begin{align*}
f_1& =-t^{-\alpha} \frac{2 E_1 \sin \varphi_1}{w_1} \left(\mathcal G_{1,1}(X_2,w_2 Y_2)\cos\varphi_1-\mathcal G_{1,2}(X_2,w_2 Y_2)\sin\varphi_1+\mathcal O(\sqrt{E_1})\right), \\
g_1& =-t^{-\alpha}\frac{\cos\varphi_1 }{w_1}\left(\mathcal G_{1,1}(X_2,w_2 Y_2)\cos\varphi_1-  \mathcal G_{1,2}(X_2,w_2 Y_2)\sin\varphi_1+\mathcal O(\sqrt{E_1})\right).
\end{align*} 
Thus, we obtain \eqref{as0}.

Consider the model example with $\omega_1(E)\equiv 1/2+E$, $\omega_2(E)\equiv 1-E$,
\begin{gather}
\label{FGmod}
\begin{split}
& f_i(E_1,E_2,\varphi_1,\varphi_2,t)\equiv t^{- \alpha }E_i \sin\varphi_i (a_{i,1} \sin\varphi_1+ a_{i,2} \sin\varphi_2), \\ 
& g_i(E_1,E_2,\varphi_1,\varphi_2,t)\equiv t^{- \alpha } \cos\varphi_i (b_{i,1}\sin \varphi_1+ b_{i,2}\sin\varphi_2)
\end{split}
\end{gather}
and constant parameters $a_{i,j},b_{i,j}\in\mathbb R$. It is clear that $f_i$ and $g_i$ satisfy \eqref{fgas}, \eqref{as0}, and the resonant condition \eqref{rc} holds with $\varkappa=\kappa=1$, $A=1/2$, $\eta_1=1$ and $\eta_2=-1$. If $f_i\equiv g_i=0$, then $E_i(t)=E_{i,0}$, $\varphi_i(t)=\omega_i(E_{i,0}t)+\varphi_{i,0}$ with arbitrary constants $E_{i,0}$ and $\varphi_{i,0}$. Under certain conditions on the parameters, this behaviour can persist in the perturbed system (see Fig.~\ref{Exm}, a), or growing and decaying solutions may occur (see Fig.~\ref{Exm}, b). In addition, attracting solutions with asymptotically constant resonant energies may arise (see Fig.~\ref{Exm}, c). This paper discusses the conditions that guarantee the existence and stability of such resonant solutions in system \eqref{PS} with perturbations satisfying \eqref{fgas} and \eqref{as0}. 
\begin{figure}
\centering
\subfigure[ ]{
\includegraphics[width=0.3\linewidth]{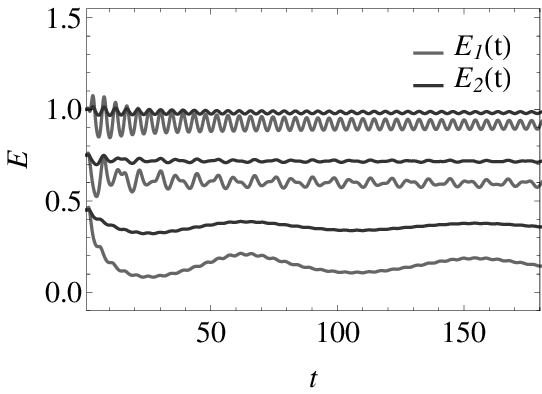}
}
\hspace{1ex}
 \subfigure[ ]{
 \includegraphics[width=0.3\linewidth]{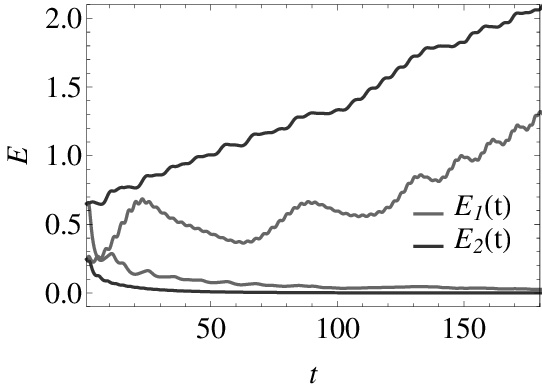}
}
\hspace{1ex}
\subfigure[ ]{
 \includegraphics[width=0.3\linewidth]{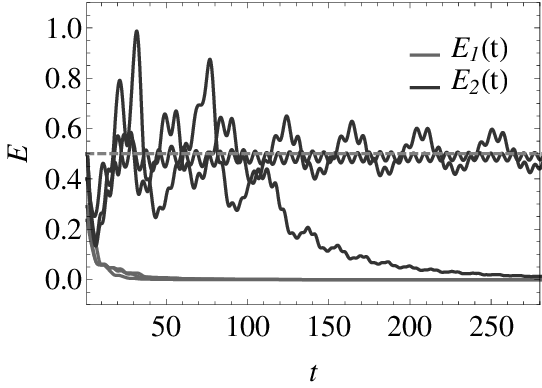}
}
\caption{\small The evolution of $E_1(t)$ and $E_2(t)$ for solutions of system \eqref{Exm} with $f_i$ and $g_i$, defined by \eqref{FGmod}, $\alpha=1/2$, different values of the parameters $a_{i,j}$ and $b_{i,j}$, and different initial conditions. The dashed curve corresponds to $E(t)\equiv 1/2$.} \label{Exm}
\end{figure}

\section{Main results}\label{sec2}

Let $A\in \mathbb R$, $A\in (0,\mathfrak E)$ be a parameter satisfying resonance condition \eqref{rc}.
Define the domain
\begin{align*}
&\mathfrak D_{\epsilon,\tau}:=\{(\rho,v,\psi,\varphi)\in\mathbb R^4: \quad 0 \leq \rho  \leq (1-\epsilon)\tau^{\frac{\alpha}{2}} \sqrt{\mathfrak E} , \quad  \epsilon \leq v+ \tau^{\frac{\alpha}{2}} A \leq \tau^{\frac{\alpha}{2}} \mathfrak E -\epsilon\}
\end{align*} 
with some $\epsilon\in [0, \epsilon_\ast)$, $\epsilon_\ast:=\min\{1,A,\mathfrak E-A\})$ and $\tau\geq t_0\geq 1$. Let the angle brackets $\langle F(\varphi) \rangle_{\varkappa \varphi}$ denote the average value of a function $F(\varphi)$ over an interval $[0,2\pi\varkappa]$,
\begin{gather*}
\langle F(\varphi)\rangle_{\varkappa \varphi} \equiv \frac{1}{2\pi\varkappa}\int\limits_0^{2\pi\varkappa} F(\varphi)\,d\varphi.
\end{gather*}
Then, we have the following

\begin{Th}\label{Th1}
Let system \eqref{PS} satisfy \eqref{fgas}, \eqref{rc} and \eqref{fgasik}. Then, for all $1< N\leq 2\alpha^{-1}$ and $\epsilon\in(0,\epsilon_\ast)$ there exist $t_\ast\geq t_0$ and the transformations $(E_1,E_2,\varphi_1,\varphi_2)\mapsto (\mathcal R,\mathcal E,\mathcal S,\varphi)\mapsto (\rho,v,\psi,\varphi)$,
\begin{gather}
\label{ch1}
	E_1=t^{-\alpha}\mathcal R^2, \quad 
	E_2=A+t^{-\frac{\alpha}{2}}\mathcal E, \quad 
	\varphi_1=\mathcal S+\frac{\kappa}{\varkappa}\varphi, \quad 
	\varphi_2=\varphi, \\
\label{ch2}
	\mathcal R=\rho+ P(\rho,v,\psi,\varphi,t), \quad 
	\mathcal E=v+ Q(\rho,v,\psi,\varphi,t), \quad 
	\mathcal S =\psi + Z(\rho,v,\psi,\varphi,t),
\end{gather}
where $ P(\rho,v,\psi,\varphi,t)$, $ Q(\rho,v,\psi,\varphi,t)$, $ Z(\rho,v,\psi,\varphi,t)$ satisfy the inequalities
\begin{gather}\label{tildeest}
| P(\rho,v,\psi,\varphi,t)|\leq \epsilon \rho, \quad 
| Q(\rho,v,\psi,\varphi,t)|\leq \epsilon, \quad 
| Z(\rho,v,\psi,\varphi,t)|\leq \epsilon
\end{gather}
for all $(\rho,v,\psi,\varphi)\in\mathfrak D_{\epsilon,t_\ast}$ and $t\geq t_\ast$,
such that for all $E_1,E_2\in [0,\mathfrak E] $, $\varphi_1,\varphi_2\in\mathbb R$ system \eqref{PS} can be transformed into
\begin{gather}\label{rvpp}
\begin{split}
\frac{d\rho}{dt}&=\Pi(\rho,v,\psi,t)+ \tilde\Pi(\rho,v,\psi,\varphi,t), \\
\frac{dv}{dt}&=\Lambda(\rho,v,\psi,t)+ \tilde\Lambda(\rho,v,\psi,\varphi,t), \\
\frac{d\psi}{dt}&=\Omega(\rho,v,\psi,t)+ \tilde\Omega(\rho,v,\psi,\varphi,t),\\
 \frac{d\varphi}{dt}&=\omega_2(A)+\tilde {\mathcal G}(\rho,v,\psi,\varphi,t),
\end{split}
\end{gather}
with
\begin{gather}
\label{LLO}
\begin{split}
&\Pi(\rho,v,\psi,t)\equiv \sum_{K=2}^N t^{-\frac{K\alpha}{2}}\Pi_{K}(\rho,v,\psi), \quad \Lambda(\rho,v,\psi,t)\equiv \sum_{K=1}^N t^{-\frac{K\alpha}{2}}\Lambda_{K}(\rho,v,\psi), \\
&\Omega(\rho,v,\psi,t)\equiv \sum_{K=1}^N t^{-\frac{K\alpha}{2}}\Omega_K(\rho,v,\psi),
\end{split}
\end{gather}
where $\Pi_{K}(\rho,v,\psi)$, $\Lambda_{K}(\rho,v,\psi)$ and  $\Omega_K(\rho,v,\psi)$ are $2\pi$-periodic in $\psi$, $\Pi_{K}(\rho,v,\psi)=\mathcal O(\rho)$, $\Lambda_{K}(\rho,v,\psi)=\mathcal O(1)$, $\Omega_{K}(\rho,v,\psi)=\mathcal O(1)$ as $\rho\to 0 $ uniformly for all $(0,v,\psi,0)\in\mathfrak D_{\epsilon,t_\ast}$. In particular, 
\begin{gather*}
\Pi_2(\rho,v,\psi)\equiv  \rho \pi(\psi), \quad  
\Lambda_1(\rho,v,\psi)\equiv   \lambda(\psi), \quad 
 \Omega_1(\rho,v,\psi)\equiv  \nu v, \\
\Lambda_2(\rho,v,\psi)\equiv   \rho \lambda_{2,1}(\psi)+v \lambda_{2,2}(\psi), \quad   
\Omega_2(\rho,v,\psi)\equiv   \nu_{2,0}(\psi)+\rho^2 \nu_{2,1}+v^2 \nu_{2,2},
\end{gather*}
where
\begin{gather*}
\pi(\psi)\equiv \frac{1}{2} \left\langle f_{1,1,2}\left(A,\psi+\frac\kappa \varkappa \varphi,\varphi\right)+\delta_{1,\alpha}\right\rangle_{\varkappa \varphi},\quad 
\lambda(\psi)\equiv \left\langle f_{2,1,0}\left(A,\psi+\frac\kappa \varkappa \varphi,\varphi\right)\right\rangle_{\varkappa \varphi}, \quad 
\nu =  -\frac{\kappa  \eta_2}{\varkappa}, 
\\
\lambda_{2,1}(\psi)\equiv \left\langle f_{2,1,1}\left(A,\psi+\frac\kappa \varkappa \varphi,\varphi\right)\right\rangle_{\varkappa \varphi}, \quad \lambda_{2,2}(\psi)= \left\langle \partial_{E_2} f_{2,1,0}\left(A,\psi+\frac\kappa \varkappa \varphi,\varphi\right)\right\rangle_{\varkappa \varphi}+ \frac{\delta_{1,\alpha}}{2}, \\
\nu_{2,0}(\psi)\equiv   
\left\langle g_{1,1,0}\left(A,\psi+\frac\kappa \varkappa \varphi,\varphi\right)-\frac{\kappa}{\varkappa}g_{2,1,0}\left(A,\psi+\frac\kappa \varkappa \varphi,\varphi\right)\right\rangle_{\varkappa \varphi}, \quad 
\nu_{2,1}=\mathcal \eta_1, \quad 
\nu_{2,2}=-\frac{\kappa \omega_2''(A)}{2\varkappa}.
\end{gather*}
The remainders satisfy the following estimates
\begin{gather}\label{tildeLLOP}
\begin{split}
&\tilde\Pi(\rho,v,\psi,\varphi,t)=\mathcal O\left(t^{-\frac{(N+1)\alpha}{2}}\right), \quad
\tilde\Lambda(\rho,v,\psi,\varphi,t)=\mathcal O\left(t^{-\frac{(N+1)\alpha}{2}}\right), \\
& \tilde\Omega(\rho,v,\psi,\varphi,t)=\mathcal O\left(t^{-\frac{(N+1)\alpha}{2}}\right), \quad 
\tilde{\mathcal G}(\rho,v,\psi,\varphi,t)=\mathcal O\left(t^{-\frac{\alpha}{2}}\right)
\end{split}
\end{gather}
as $t\to\infty$ uniformly for all $(\rho,v,\psi,\varphi)\in\mathfrak D_{\epsilon,t_\ast}$.
\end{Th}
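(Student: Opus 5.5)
First I will apply the explicit substitution \eqref{ch1} to \eqref{PS}. With $E_1=t^{-\alpha}\mathcal R^2$ and the expansion \eqref{fgasik}, where $f_{1,k,0}\equiv f_{1,k,1}\equiv0$, the equation for $\mathcal R$ is
\[
\frac{d\mathcal R}{dt}=\frac{t^{\alpha}f_1}{2\mathcal R}+\frac{\alpha\mathcal R}{2t}.
\]
Its right-hand side becomes a series in powers $t^{-K\alpha/2}$, because $E_1^{i/2}=t^{-i\alpha/2}\mathcal R^i$. The leading term is $t^{-\alpha}\tfrac{\mathcal R}{2}f_{1,1,2}$, and every term is $\mathcal O(\mathcal R)$. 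The term $\alpha\mathcal R/(2t)$ coincides with $t^{-\alpha}$ only when $\alpha=1$, which is the source of $\delta_{1,\alpha}$ in $\pi$; I will treat $t^{-1}$ as $t^{-2\alpha/2}$ in that case and otherwise as a higher-order contribution. Since $N\le 2/\alpha$, all powers $t^{-1-k\alpha/2}$ that appear are either absorbed into the sum or are $\mathcal O(t^{-(N+1)\alpha/2})$.

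Similarly,
\[
\frac{d\mathcal E}{dt}=t^{\alpha/2}f_2+\frac{\alpha\mathcal E}{2t},
\]
which gives $t^{-\alpha/2}f_{2,1,0}$ at leading order. At order $t^{-\alpha}$ it gives $\mathcal R f_{2,1,1}$, the Taylor term $\mathcal E\,\partial_{E_2}f_{2,1,0}$, and $\delta_{1,\alpha}\mathcal E/2$. For the phase $\mathcal S=\varphi_1-\tfrac{\kappa}{\varkappa}\varphi_2$, I will Taylor expand $\omega_1(t^{-\alpha}\mathcal R^2)-\tfrac{\kappa}{\varkappa}\omega_2(A+t^{-\alpha/2}\mathcal E)$ using \eqref{rc}. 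This gives
\[
-t^{-\alpha/2}\tfrac{\kappa}{\varkappa}\eta_2\mathcal E+t^{-\alpha}\bigl(\eta_1\mathcal R^2-\tfrac{\kappa\omega_2''(A)}{2\varkappa}\mathcal E^2+g_{1,1,0}-\tfrac{\kappa}{\varkappa}g_{2,1,0}\bigr)+\dots,
\]
and finally $d\varphi/dt=\omega_2(A)+\mathcal O(t^{-\alpha/2})$. The result is a system with slow variables $(\mathcal R,\mathcal E,\mathcal S)$ and one fast phase $\varphi$. The right-hand sides are $\sum_K t^{-K\alpha/2}F_K(\mathcal R,\mathcal E,\mathcal S,\varphi)$, and they are $2\pi\varkappa$-periodic in $\varphi$ once $\varphi_1=\mathcal S+\frac{\kappa}{\varkappa}\varphi$ is substituted.

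Second, I will remove the $\varphi$-dependence order by order using the near-identity change \eqref{ch2}. I will look for
\[
P=\sum_{K=1}^{N}t^{-K\alpha/2}P_K,\qquad Q=\sum_{K=1}^{N}t^{-K\alpha/2}Q_K,\qquad Z=\sum_{K=1}^{N}t^{-K\alpha/2}Z_K,
\]
with coefficients periodic in $\varphi$ and $\psi$, and with $P_K=\mathcal O(\rho)$. Substituting into the equations and collecting the coefficients of $t^{-K\alpha/2}$ gives homological equations of the form
\[
\omega_2(A)\,\partial_\varphi P_K=F_K^{\rho}-\langle F_K^{\rho}\rangle_{\varkappa\varphi}+(\text{terms determined by }P_j,Q_j,Z_j,\ j<K).
\]
The equations for $Q_K$ and $Z_K$ have the same form. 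Each is solved by taking a zero-mean antiderivative in $\varphi$, and the averaged part is what defines $\Pi_K,\Lambda_K,\Omega_K$. The $t$-derivatives of the $P_K$ produce factors $t^{-1-K\alpha/2}$; I will push these into the remainder or into higher-order terms, again using $N\le2/\alpha$. Computing the first two orders explicitly yields the formulas for $\pi,\lambda,\nu,\lambda_{2,j},\nu_{2,j}$ in the statement. The corrections from $P_1,Q_1,Z_1$ do not change the averages at order $2$ in these formulas, because they have zero mean and enter the products only linearly with $\varphi$-independent coefficients. This last point needs checking, and if necessary I will fix the freedom in the averages of $P_K,Q_K,Z_K$ to arrange it.

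Third, I will derive the estimates. On $\mathfrak D_{\epsilon,t_\ast}$ we have $\rho\le t^{\alpha/2}\sqrt{\mathfrak E}$ and $|v|\lesssim t^{\alpha/2}$, so the variables can be large. Each term $t^{-K\alpha/2}P_K$ is a polynomial-type expression in $\rho,v$ obtained from finitely many terms of \eqref{fgasik} and a Taylor expansion in $E_2$. I therefore plan to bound it through the original variables, using $t^{-\alpha/2}\rho\sim\sqrt{E_1}$ and $t^{-\alpha/2}v\sim E_2-A$, both bounded. This is the main obstacle: the unscaled expansions do not give uniform smallness on the whole growing domain. What I will try first is to define $P,Q,Z$ not through truncated Taylor series but through $\varphi$-antiderivatives of the exact oscillatory parts of the transformed vector field, keeping the $t^{-\alpha/2}$ prefactors. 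Then $|P|\le C t^{-\alpha/2}\rho$ and $|Q|,|Z|\le Ct^{-\alpha/2}$ hold uniformly in the domain, and \eqref{tildeest} follows by choosing $t_\ast$ large. The remainders \eqref{tildeLLOP} then come from the asymptotic expansions \eqref{fgas}, which are uniform in $E$, together with invertibility of the Jacobian $I+\mathcal O(t^{-\alpha/2})$ of \eqref{ch2}.
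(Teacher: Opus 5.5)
Your proposal follows the paper's proof: the rescaling \eqref{ch1}, then an $N$-step near-identity averaging in the fast phase $\varphi$, where the homological equations are solved by zero-mean $2\pi\varkappa$-periodic antiderivatives and the $\varphi$-means define $\Pi_K,\Lambda_K,\Omega_K$, followed by invertibility and remainder estimates; the only difference is cosmetic, since the paper constructs $\rho=U(\mathcal R,\mathcal E,\mathcal S,\varphi,t)$, $v=V(\cdot)$, $\psi=\Psi(\cdot)$ and obtains $P,Q,Z$ by inverting (via $|D-1|<\epsilon$), whereas you parametrize the old variables by the new ones directly. The obstacle you raise in your last paragraph is not real, so the exact-antiderivative detour is unnecessary (taken literally, it would also not give the $t$-independent $\Pi_K,\Lambda_K,\Omega_K$ required in \eqref{LLO}): every monomial $\rho^a v^b$ in $t^{-K\alpha/2}Q_K$ or $t^{-K\alpha/2}Z_K$ equals $(t^{-\alpha/2}\rho)^a(t^{-\alpha/2}v)^b$ times a positive power of $t^{-\alpha/2}$, and in $t^{-K\alpha/2}P_K$ it equals $\rho\,(t^{-\alpha/2}\rho)^{a-1}(t^{-\alpha/2}v)^b$ times such a power, so on $\mathfrak D_{\epsilon,t_\ast}$ with $t\ge t_\ast$ the truncated series already give $|P|\le Ct^{-\alpha/2}\rho$ and $|Q|,|Z|\le Ct^{-\alpha/2}$ with $C$ independent of $t_\ast$ --- this is the rescaling you yourself mention, and it is what the paper's one-line choice of $t_\ast$ tacitly relies on.
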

The proof of Theorem~\ref{Th1} is contained in Section~\ref{sec3}.

Note that if $(\rho(t),v(t),\psi(t),\varphi(t))\in\mathfrak D_{\epsilon,t_\ast}$ for all $t\geq t_\ast$, then, by \eqref{ch1}, \eqref{ch2} and \eqref{tildeest}, it follows that $E_1(t),E_2(t)\in[0,\mathfrak E]$ as $t\geq t_\ast$. Moreover, from the last equation in \eqref{rvpp} it follows that $|\varphi(t)|\to \infty$ as $t\to \infty$ while $(\rho(t),v(t),\psi(t),0)\in\mathfrak D_{\epsilon,t_\ast}$. Since the right-hand sides of equations for $\rho(t)$, $v(t)$, and $\psi(t)$ depend periodically on $\varphi$ only through rapidly decaying terms, the behavior of $\varphi(t)$ does not have a significant effect on the dynamics of the solutions, and the last equation in \eqref{rvpp} can be neglected. 

Let $N\in [2,2\alpha^{-1}]$. Consider the model system
\begin{gather}\label{rvpmod}
\frac{d\hat\rho}{dt}=\Pi(\hat\rho,\hat v,\hat\psi,t), \quad
\frac{d\hat v}{dt}=\Lambda(\hat\rho,\hat v,\hat\psi,t), \quad
\frac{d\hat\psi}{dt}=\Omega(\hat\rho,\hat v,\hat\psi,t),
\end{gather}
obtained from \eqref{rvpp} by dropping the remainders $\tilde \Pi$, $\tilde \Lambda$, $\tilde \Omega$ and the equation for $\varphi$. It can easily be checked that the truncated system 
\begin{gather}\label{rvpls}
\frac{d\hat\rho}{dt}=t^{-\alpha} \hat \rho \pi(\hat\psi), \quad
\frac{d\hat v}{dt}=t^{-\frac{\alpha}{2}}\lambda(\hat\psi), \quad
\frac{d\hat\psi}{dt}=t^{-\frac{\alpha}{2}}\nu \hat v
\end{gather}
has a stationary solution if and only if the equation $\lambda(\psi)=0$ has a root. 
Assume that
\begin{gather}\label{asl}
\exists\, \psi_0\in\mathbb R: \quad \lambda(\psi_0)=0, \quad \lambda'(\psi_0)\neq 0.
\end{gather}
Then, system \eqref{rvpls} has a solution $\hat\rho(t)\equiv \hat v(t)\equiv 0$, $\hat\psi(t)\equiv \psi_0$. Moreover, we have the following lemma.

\begin{Lem}\label{Lem0}
Let assumption \eqref{asl} hold. 
\begin{itemize}
	\item If $\pi(\psi_0)>0$ or $\lambda'(\psi_0)\nu >0$, the solution $(0,0,\psi_0)$ of system \eqref{rvpls} is unstable. 
	\item If $\pi(\psi_0)<0$ and $\lambda'(\psi_0)\nu <0$, the solution $(0,0,\psi_0)$ of system \eqref{rvpls} is stable.
\end{itemize}
\end{Lem}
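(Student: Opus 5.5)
We study the truncated system \eqref{rvpls}, which is nonautonomous only through the scalar factors $t^{-\alpha}$ and $t^{-\alpha/2}$. We change time by $s=\int_{t_0}^t \varsigma^{-\alpha/2}\,d\varsigma$, which satisfies $s\to\infty$ as $t\to\infty$ because $\alpha\le 1$. Writing $\hat\psi=\psi_0+\theta$, the $(v,\theta)$-subsystem becomes the autonomous planar system
\begin{gather*}
\frac{dv}{ds}=\lambda(\psi_0+\theta), \qquad \frac{d\theta}{ds}=\nu v .
\end{gather*}
It is decoupled from $\hat\rho$. The $\rho$-equation reads $d\hat\rho/ds=t(s)^{-\alpha/2}\hat\rho\,\pi(\psi_0+\theta)$, so it is linear in $\hat\rho$ and solvable explicitly:
\begin{gather*}
\hat\rho(t)=\hat\rho(t_0)\exp\Big(\int_{t_0}^t \varsigma^{-\alpha}\pi(\hat\psi(\varsigma))\,d\varsigma\Big).
\end{gather*}

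The $(v,\theta)$-subsystem is Hamiltonian, with first integral
\begin{gather*}
H(v,\theta)=\frac{\nu v^2}{2}-\int_0^\theta\lambda(\psi_0+\sigma)\,d\sigma .
\end{gather*}
Near the origin $H\approx \tfrac12(\nu v^2-\lambda'(\psi_0)\theta^2)$.

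\textbf{Stable case.} Assume $\lambda'(\psi_0)\nu<0$. Then $\nu H$ is positive definite near $(0,0)$, so the origin is a nondegenerate center and is Lyapunov stable; small initial data keep $|v|,|\theta|\le\delta$ for all time. Since $\pi$ is continuous and $\pi(\psi_0)<0$, we have $\pi(\hat\psi)\le\pi(\psi_0)/2<0$ along such solutions. The exponent above is therefore nonpositive, which gives $\hat\rho(t)\le\hat\rho(t_0)$ (indeed $\hat\rho\to0$ when $\alpha\le1$). This establishes stability of $(0,0,\psi_0)$. To present it uniformly, I would use the Lyapunov function $V=\hat\rho^2+\nu H(\hat v,\hat\psi-\psi_0)$ in a neighbourhood, which is positive definite and satisfies $dV/dt=2t^{-\alpha}\hat\rho^2\pi(\hat\psi)\le0$.

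\textbf{Unstable case 1.} Assume $\lambda'(\psi_0)\nu>0$. Then the origin is a hyperbolic saddle of the autonomous system in time $s$, and the unstable manifold gives solutions leaving any small neighbourhood in finite $s$, hence in finite $t$. Taking $\hat\rho(t_0)=0$, which is invariant, shows instability of the full system.

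\textbf{Unstable case 2.} Assume $\pi(\psi_0)>0$ and that case 1 does not hold, so $\lambda'\nu<0$ and we have a center. Take $\hat v(t_0)=0$ and $\hat\psi(t_0)=\psi_0$, which is an equilibrium of the planar part. Then $\hat\rho(t)=\hat\rho(t_0)\exp\big(\pi(\psi_0)\int_{t_0}^t\varsigma^{-\alpha}d\varsigma\big)\to\infty$, since $\int^\infty\varsigma^{-\alpha}d\varsigma=\infty$ for $\alpha\le1$. So arbitrarily small $\hat\rho(t_0)>0$ leave any neighbourhood, which proves instability.

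The main obstacle is the bookkeeping of the nonautonomous factors, specifically making sure that divergence of $\int^\infty t^{-\alpha}dt$ and $\int^\infty t^{-\alpha/2}dt$ holds for $\alpha\in(0,1]$. This divergence is what drives the growth in the unstable cases. Everything else reduces to standard planar Hamiltonian facts (center versus saddle).
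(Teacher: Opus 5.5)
Your proof is correct. In the stable case it coincides with the paper's proof. The paper's $L=z_1^2/2+|\nu|z_2^2/2-(\mathrm{sgn}\,\nu)\int_0^{z_3}\lambda(\psi_0+\varsigma)\,d\varsigma$ equals $z_1^2/2+(\mathrm{sgn}\,\nu)H$, which is your $V$ up to the weight on $\hat\rho^2$, and it has the same derivative $t^{-\alpha}\hat\rho^2\pi(\hat\psi)\le 0$.

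The instability parts are argued differently:
\begin{itemize}
\item For $\lambda'(\psi_0)\nu>0$, the paper computes the frozen-time eigenvalues $\sigma_\pm(t)=\pm t^{-\alpha/2}\sqrt{\lambda'(\psi_0)\nu}$. It declares instability because they are real and of opposite signs. Frozen eigenvalues do not decide stability of a nonautonomous system in general; they do here only because the time dependence of the $(\hat v,\hat\psi)$-block is a common scalar factor. Your time change $s=\int_{t_0}^t\varsigma^{-\alpha/2}\,d\varsigma$ is exactly what justifies this step: it makes that block autonomous, Hamiltonian, and decoupled from $\hat\rho$.
\item For $\pi(\psi_0)>0$, the paper integrates $d|z_1|/dt\ge c\,t^{-\alpha}|z_1|$. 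This inequality tacitly requires $|z_3(t)|\le\delta$ along the trajectory. Your choice of initial data on the invariant axis $\{\hat v=0,\ \hat\psi=\psi_0\}$, together with the explicit exponential formula for $\hat\rho$, removes that issue.
\end{itemize}

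So your route is more self-contained and rigorous for this particular lemma. The paper's Lyapunov/eigenvalue style is the one that carries over to Lemma~\ref{Lem1} and Theorem~\ref{Th2}. There the $t^{-\alpha}$ corrections couple the variables, and no time change makes the system autonomous. In that setting the paper replaces the eigenvalue argument by the Chetaev function $y_2y_3$.

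Two minor remarks. In your second unstable case you need not assume $\lambda'(\psi_0)\nu<0$: the axis $\{\hat v=0,\ \hat\psi=\psi_0\}$ is invariant whenever $\lambda(\psi_0)=0$. Also, the bound in the stable case should read $|\hat\rho(t)|\le|\hat\rho(t_0)|$.
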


The proof is contained in Section~\ref{sec4}.

Note that in the case of stability of the equilibrium $(0,0,\psi_0)$ in the truncated system \eqref{rvpls}, similar dynamics is observed in the model system. Define $\vartheta(\psi_0):=\lambda_{2,2}(\psi_0)+\nu'_{2,0}(\psi_0)$. Then we have

\begin{Lem}\label{Lem1}
Let assumption \eqref{asl} hold, $\pi(\psi_0)<0$ and $\lambda'(\psi_0)\nu<0$. 
\begin{itemize}
	\item If $\vartheta(\psi_0)<0$, then system \eqref{rvpmod} has an asymptotically stable particular solution $(\hat\rho_0(t),\hat v_0(t),\hat\psi_0(t))$ such that 
\begin{itemize}
\item $\hat\rho_0(t)=\mathcal O(t^{-\frac\alpha 2})$, $\hat v_0(t)=\mathcal O(t^{-\frac \alpha 2})$, $\hat\psi_0(t)=\psi_0+\mathcal O(t^{-\alpha})$ as $t\to\infty$ if $0<\alpha<1$,
\item $\hat\rho_0(t)=\mathcal O(t^{-\chi})$, $\hat v_0(t)=\mathcal O(t^{-\chi})$, $\hat\psi_0(t)=\psi_0+\mathcal O(t^{-\chi})$ as $t\to\infty$ if $\alpha=1$,
\end{itemize}
with some $\chi\in(0,\chi_0)$, $\chi_0=\min\{1,2|\pi(\psi_0)|,|\vartheta(\psi_0)|\}/2$.
\item If $\vartheta(\psi_0)>0$, then there exists $\varepsilon>0$ such that for all $\delta\in (0,\varepsilon)$ there is $\tau_\ast\geq t_\ast$ such that  the solution 
$(\hat\rho(t),\hat v(t),\hat\psi(t))$ of system \eqref{rvpmod} with initial data 
$|\hat\rho(\tau_s)|+|\hat v(\tau_s)|+|\hat \psi(\tau_s)-\psi_0|\leq \delta$ with some $\tau_s\geq \tau_\ast$ satisfies the inequality
$|\hat\rho(\tau_e)|+|\hat v(\tau_e)|+|\hat\psi(\tau_e)-\psi_0|\geq \varepsilon$ at some $\tau_e>\tau_s$.
\end{itemize}
\end{Lem}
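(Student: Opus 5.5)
The plan is to work near the invariant plane $\hat\rho=0$ and reduce everything to a damped slowly-rotating oscillator in $(\hat v,\hat\psi-\psi_0)$. First we note that $\Pi_K=\mathcal O(\rho)$ for every $K$, so $\hat\rho\equiv 0$ is invariant for \eqref{rvpmod}. The particular solution will be sought with $\hat\rho_0\equiv 0$. In the $(\hat v,\hat\psi)$ equations restricted to $\hat\rho=0$ we then construct an approximate solution as a finite sum
\[
\hat v\approx\sum_{k\ge 1} t^{-\frac{k\alpha}{2}}v_k,\qquad \hat\psi\approx\psi_0+\sum_{k\ge 1} t^{-\frac{k\alpha}{2}}\psi_k .
\]
The coefficients come from the triangular algebraic system obtained by equating powers of $t^{-\alpha/2}$. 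At order $t^{-\alpha}$ one gets $\nu v_1+\nu_{2,0}(\psi_0)=0$ and $\lambda'(\psi_0)\psi_1+\lambda_{2,2}(\psi_0)v_1=0$, and these are solvable because $\nu\neq0$ and $\lambda'(\psi_0)\neq0$. This gives $\hat v_0=\mathcal O(t^{-\alpha/2})$ and $\hat\psi_0-\psi_0=\mathcal O(t^{-\alpha})$.

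If $0<\alpha<1$, the terms $d/dt(t^{-k\alpha/2})=\mathcal O(t^{-k\alpha/2-1})$ are of higher order than the terms used and enter only at later steps. If $\alpha=1$ they are of the same order, $t^{-3/2}$, as the terms that matter. In that case I would not aim for an exact power series. Instead I would take only the leading correction, or simply the point $(0,0,\psi_0)$, and let the Lyapunov argument below produce decay $t^{-\chi}$ with $\chi<\chi_0$; this is where the weaker rate in the statement originates.

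Next we shift to the deviations $r=\hat\rho$, $u=\hat v-\hat v_0(t)$, $\phi=\hat\psi-\hat\psi_0(t)$. The linearised system has the form
\[
u'=t^{-\frac{\alpha}{2}}\lambda'(\psi_0)\phi+t^{-\alpha}\bigl(\lambda_{2,2}u+\lambda_{2,1}r+\dots\bigr),\qquad
\phi'=t^{-\frac{\alpha}{2}}\nu u+t^{-\alpha}\nu_{2,0}'(\psi_0)\phi+\dots,\qquad
r'=t^{-\alpha}\pi(\psi_0)r+\dots .
\]
Since $\lambda'\nu<0$, the leading part is a centre with conserved quadratic form $H=|\lambda'(\psi_0)|\phi^2+|\nu|u^2$. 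Its derivative at order $t^{-\alpha}$ is an indefinite oscillating quadratic form whose average over the fast rotation equals $\vartheta(\psi_0)H$; this is exactly where $\vartheta=\lambda_{2,2}+\nu'_{2,0}$, the trace of the second-order block, appears. To remove the oscillating part we take the Lyapunov function
\[
V=r^2+H+t^{-\frac{\alpha}{2}}\bigl(\gamma_1 u\phi+\gamma_2 r u+\gamma_3 r\phi\bigr),
\]
with constants $\gamma_i$ chosen to kill the non-averaged terms. This should give, in a small neighbourhood and for large $t$,
\[
\frac{dV}{dt}\le -t^{-\alpha}\,c\,V \quad\text{when } \vartheta<0,\qquad c<\min\{2|\pi(\psi_0)|,|\vartheta(\psi_0)|\}.
\]
Here nonlinear terms are absorbed by smallness of the neighbourhood, and remainder terms of the approximate solution by largeness of $t$. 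Integrating gives $V\lesssim \exp(-c\,t^{1-\alpha}/(1-\alpha))$ for $\alpha<1$, hence asymptotic stability with $\hat\rho_0,\hat v_0$ decaying as the constructed series. For $\alpha=1$ it gives $V\lesssim t^{-c}$, which yields the $t^{-\chi}$ estimates with $\chi<\chi_0$; the constraint $\chi<1/2$ comes from the inhomogeneous $t^{-3/2}$ forcing.

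For $\vartheta>0$ we use the $(u,\phi)$-part $W=H+t^{-\alpha/2}\gamma_1 u\phi$ on the subset where $r^2\le W$. There we get $dW/dt\ge t^{-\alpha}cW$, while $r$ decays because $\pi<0$. This is a Chetaev-type argument: $W$ grows like $\exp(ct^{1-\alpha})$ or $t^{c}$, both unbounded, so every solution starting with $W>0$ must leave the $\varepsilon$-ball. Solutions starting with $W=0$ are handled by noting that $W$ becomes positive immediately unless $u=\phi=0$.

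The main obstacle is choosing $\gamma_1,\gamma_2,\gamma_3$ so that the order-$t^{-\alpha}$ part of $dV/dt$ becomes sign-definite, since the leading rotation is degenerate in time (its frequency is $\propto t^{-\alpha/2}$). Closely tied to this is handling the case $\alpha=1$, where the rotation and the time derivative of the weight $t^{-\alpha/2}$ compete. I would first try the normal-form choice $\gamma_1=(\lambda_{2,2}-\nu'_{2,0})/(\text{frequency})$ and check positivity directly.
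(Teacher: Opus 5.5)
\textbf{Overall.} Your architecture matches the paper's: an approximate solution, a shift, a quadratic form with $t^{-\alpha/2}$ cross terms, and Chetaev-type growth when $\vartheta>0$. Taking $\hat\rho_0\equiv0$ because $\{\hat\rho=0\}$ is invariant is a valid simplification. The step you call the main obstacle is routine. Since $\frac{d}{dt}(u\phi)=t^{-\alpha/2}\operatorname{sgn}(\nu)\bigl(|\nu|u^2-|\lambda'(\psi_0)|\phi^2\bigr)+\dots$ is exactly the oscillating part of $dH/dt$, take $\gamma_1=\operatorname{sgn}(\nu)\bigl(\nu_{2,0}'(\psi_0)-\lambda_{2,2}(\psi_0)\bigr)$, $\gamma_2=0$, $\gamma_3=-2\operatorname{sgn}(\nu)\lambda_{2,1}(\psi_0)$. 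These are twice the coefficients in \eqref{Lalpha}, with no division by a frequency. The weight's derivative contributes only $\mathcal O(t^{-1-\alpha/2})|\mathbf y|^2=o(t^{-\alpha})|\mathbf y|^2$, even for $\alpha=1$. The genuine gaps are elsewhere.

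\textbf{Gap 1: the quadratic $H$ fails in unscaled variables.} The $u$-equation contains $t^{-\alpha/2}\bigl(\lambda(\hat\psi_\ast+\phi)-\lambda(\hat\psi_\ast)\bigr)$. This puts the sign-indefinite term $t^{-\alpha/2}|\nu|\lambda''(\psi_0)u\phi^2$ into $dH/dt$. It exceeds the damping $ct^{-\alpha}|\mathbf y|^2$ by a factor of order $t^{\alpha/2}|\mathbf y|$, so on any fixed ball it eventually dominates. It therefore cannot be ``absorbed by smallness of the neighbourhood''. The paper either rescales the deviation by $t^{-\gamma}$ (existence step), or replaces $|\lambda'(\psi_0)|\phi^2$ by $-2\operatorname{sgn}(\nu)\int_0^{\phi}\bigl(\lambda(\hat\psi(t)+\zeta)-\lambda(\hat\psi(t))\bigr)d\zeta$. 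That integral is an exact first integral of the $t^{-\alpha/2}$-part; see \eqref{Lfunct} and $\tilde\ell$.

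\textbf{Gap 2: the inhomogeneous residual.} Because you shift by an approximate solution, the deviation equation carries a residual $\mathcal O(t^{-\alpha-\sigma})$. You therefore only get $dV/dt\le -ct^{-\alpha}V+Ct^{-\alpha-\sigma}\sqrt V$, and near $\mathbf y=0$ the forcing dominates. Your bound $V\lesssim\exp(-ct^{1-\alpha}/(1-\alpha))$ would mean exact solutions converge exponentially to a curve that is not a solution. The inequality you actually have gives only trapping in an annulus $\delta_\epsilon\le|\mathbf y|\le\epsilon$. That yields an exact solution $\mathcal O(t^{-\sigma})$-close to the approximate one. Asymptotic stability then needs a second Lyapunov argument around this exact solution, where $\mathbf G(0,t)\equiv0$. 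That two-step structure is what the paper does.

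\textbf{Consequences for the instability part.} It inherits both problems. Growth of $W$ can only be shown for $|(u,\phi)|\ge\delta$, with $\tau_\ast$ depending on $\delta$. So ``every solution with $W>0$ leaves'' is unjustified, and in general a solution shadowing the approximate one exists. The paper proves escape only for data with $|(y_2,y_3)|=\delta$. In addition, the cone $r^2\le W$ is too wide: the term $2t^{-\alpha}|\nu|\lambda_{2,1}ur$ in $dW/dt$ can then be comparable to $\vartheta t^{-\alpha}W$. You need $r^2\le\eta W$ with small $\eta$, or an $r\phi$ correction, or, as in the paper, first confine $|y_1|\le\epsilon\ll\delta$.

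\textbf{The ansatz for $0<\alpha<1$.} When $2/\alpha\notin\mathbb Z$, the terms $t^{-k\alpha/2-1}$ never coincide with any $t^{-j\alpha/2}$, so they cannot ``enter at later steps''. Truncating leaves a residual $\mathcal O(t^{-1-\alpha/2})$. The Lyapunov estimate then bounds the deviation only by $\mathcal O(t^{-(1-\alpha/2)})$, which misses $\hat\psi_0-\psi_0=\mathcal O(t^{-\alpha})$ for $\alpha>2/3$. This is why the paper uses the powers $t^{-n\alpha/2-m(1-\alpha)}$ in \eqref{anz}, or $t^{-k/(2q)}$ in \eqref{anz1}.

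Your first-order equations are also mis-indexed. At order $t^{-\alpha}$ the $v$-equation gives $\lambda'(\psi_0)\psi_1=0$, so $\psi_1=0$. The term $\lambda_{2,2}(\psi_0)v_1$ enters at order $t^{-3\alpha/2}$, together with $\Lambda_3(0,0,\psi_0)$, and fixes $\psi_2$. With your equation you would get only $\hat\psi_0-\psi_0=\mathcal O(t^{-\alpha/2})$.

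\textbf{The case $\alpha=1$.} The bare point $(0,0,\psi_0)$ fails, because the residual $t^{-1}\nu_{2,0}(\psi_0)$ is of the same order as the damping. With $v_1=-\nu_{2,0}(\psi_0)/\nu$ the residual becomes $\mathcal O(t^{-3/2})$. Once the nonlinear fix above is in place, a Gronwall estimate gives deviation $\mathcal O(t^{-\min\{c,1\}/2})$ with $c<\min\{2|\pi(\psi_0)|,|\vartheta(\psi_0)|\}$. This correctly reproduces $\chi_0$, and on $\{\hat\rho=0\}$ no $\log t$ terms are needed.
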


The proof is contained in Section~\ref{sec5}.

It follows from Lemma~\ref{Lem1} that a stable equilibrium $(0,0,\psi_0)$ of the truncated system generates a stable solution $(\hat\rho_0(t), \hat v_0(t), \hat\psi_0(t))$ of the model system such that $\hat\rho_0(t)\to 0$, $\hat v_0(t)\to 0$, and $\hat\psi_0(t)\to \psi_0$ as $t\to\infty$. Let us show that such a solution corresponds to a phase synchronization regime in system \eqref{PS}.

\begin{Th}\label{Th2}
Let system \eqref{PS} satisfy \eqref{fgas}, \eqref{rc}, \eqref{fgasik}, and assumption \eqref{asl} hold.
\begin{itemize}
\item If $\pi(\psi_0)<0$, $\lambda'(\psi_0)\nu<0$ and $\vartheta(\psi_0)<0$, then for all $\varepsilon>0$ there exist $\delta_\varepsilon>0$ and $t_\varepsilon>0$ such that for all $t_s\geq t_\varepsilon$ any solution $(E_1(t),E_2(t),\varphi_1(t),\varphi_2(t))$ of system \eqref{PS} with initial data $|E_1(t_s)|+|E_2(t_s)-A|+|\varphi_1(t_s)-\kappa\varphi_2(t_s)/\varkappa-\psi_0|\leq \delta_\varepsilon$ satisfies the inequality
\begin{gather*}
|E_1(t)|+|E_2(t)-A|+\left|\varphi_1(t)-\frac{\kappa}{\varkappa}\varphi_2(t)-\psi_0\right|\leq \varepsilon
\end{gather*}
for all $t> t_s$.
\item If one of the following conditions holds:
(i) $\pi(\psi_0)>0$, (ii) $\lambda'(\psi_0)\nu>0$, (iii) $\pi(\psi_0)<0$, $\lambda'(\psi_0)\nu<0$, $\vartheta(\psi_0)>0$, then
there exists $\varepsilon>0$ such that for all $\delta\in (0,\varepsilon)$ there is $\tau_\ast\geq t_\ast$ such that the solution $(E_1(t),E_2(t),\varphi_1(t),\varphi_2(t))$ of system \eqref{PS} with initial data 
$|E_1(\tau_s)|+|E_2(\tau_s)-A|+|\varphi_1(\tau_s)-\kappa\varphi_2(\tau_s)/\varkappa-\psi_0|\leq \delta$ with some $\tau_s\geq \tau_\ast$ satisfies the inequality
\begin{gather*}
|E_1(\tau_e)|+|E_2(\tau_e)-A|+\left|\varphi_1(\tau_e)-\frac{\kappa}{\varkappa}\varphi_2(\tau_e)-\psi_0\right|\geq \varepsilon
\end{gather*}
at some $\tau_e>\tau_s$.

\end{itemize}
\end{Th}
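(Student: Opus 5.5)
We will prove Theorem~\ref{Th2} by passing to the variables of Theorem~\ref{Th1}, where the problem reduces to the stability of the particular solution of Lemma~\ref{Lem1} under the decaying remainders $\tilde\Pi,\tilde\Lambda,\tilde\Omega=\mathcal O(t^{-(N+1)\alpha/2})$. Fix $N=2$ when $\alpha=1$ and some $N\in[2,2\alpha^{-1}]$ with $N\alpha/2\ge 1$ (for instance $N=\lfloor 2/\alpha\rfloor$, possibly adjusted) when $\alpha<1$. The remainders must then be integrable, or at least small compared with the Lyapunov decay rate. Note that by \eqref{ch1}, \eqref{ch2}, \eqref{tildeest} we have $E_1=t^{-\alpha}\mathcal R^2$ with $\mathcal R\asymp\rho$, $|E_2-A|\le t^{-\alpha/2}(|v|+\epsilon)$, and $\varphi_1-\kappa\varphi_2/\varkappa-\psi_0=\psi-\psi_0+Z$. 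Hence closeness in the original variables follows from boundedness of $(\rho,v,\psi-\psi_0)$ in $\mathfrak D_{\epsilon,t_\ast}$ together with large $t$ and small $\epsilon$. Conversely, small initial data in the original variables give $\rho(t_s)\le t_s^{\alpha/2}\sqrt{\delta}\,(1+\epsilon)$. This is \emph{not} small, so the stability argument must tolerate $\rho,v$ of size up to $t^{\alpha/2}\sqrt\delta$. We therefore work with the rescaled quantities $r=t^{-\alpha/2}\rho$ and $w=t^{-\alpha/2}v$, which are small.

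For the stability part, we build a Lyapunov function for the full system \eqref{rvpp} in a neighbourhood of $(0,0,\psi_0)$. First linearize the truncated dynamics \eqref{rvpls}. Set $\theta=\psi-\psi_0$. The $(v,\theta)$ subsystem $\dot v=t^{-\alpha/2}(\lambda'(\psi_0)\theta+\dots)$, $\dot\theta=t^{-\alpha/2}\nu v$ is a center with energy $H=v^2/2-\lambda'(\psi_0)\theta^2/(2\nu)$, which is positive definite since $\lambda'\nu<0$. The second-order terms $\Lambda_2,\Omega_2$ contribute $t^{-\alpha}\vartheta(\psi_0)H$-type damping after adding a correction $t^{-\alpha/2}v\theta\cdot c$ to $H$. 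This is the standard "near-identity" Lyapunov modification, and the choice of $c$ is determined so that the cross terms average out, producing $\vartheta$. For $\rho$, use $\rho^2$, whose derivative is $2t^{-\alpha}\pi(\psi_0)\rho^2+\dots$, which is negative. Combine them as $U=H+\gamma\rho^2+t^{-\alpha/2}(\dots)$, for which $dU/dt\le -t^{-\alpha}\mu U+\mathcal O(t^{-(N+1)\alpha/2}\sqrt U)$ with $\mu>0$ in the domain $U\le\Delta^2$. Since $N\alpha/2\ge1$ gives integrability of $t^{-(N+1)\alpha/2}$ relative to the decay, the standard comparison argument shows that $U(t)$ stays below $\Delta^2$ once it starts below $\Delta^2/4$ and $t_s$ is large. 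The issue that $\rho$ may be large is handled because the equation for $\rho$ is linear in $\rho$ to leading order ($\Pi_K=\mathcal O(\rho)$). Consequently $\rho$ is in fact controlled multiplicatively: $d\rho/dt\le \rho\,t^{-\alpha}(\pi(\psi_0)+\text{small})+\text{rem}$, and this suffices to show that $\rho$ does not grow. The same multiplicative control gives the first bullet, after translating back via the relations above.

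For instability, treat the three cases separately. In case (i), $\pi(\psi_0)>0$. Let $\psi$ remain near $\psi_0$ (argue by contradiction: if all quantities stay $\varepsilon$-close, then $\psi\approx\psi_0$). Then $\rho$ grows like $t^{\pi(\psi_0)t^{1-\alpha}/(1-\alpha)}$ (or a power for $\alpha=1$) and eventually exceeds $t^{\alpha/2}\sqrt{\varepsilon}$, so $E_1\ge\varepsilon$, a contradiction. In case (ii), $\lambda'\nu>0$ makes the $(v,\theta)$ linearization a saddle with rate $t^{-\alpha/2}\sqrt{\lambda'\nu}$. A Chetaev function $v\theta$ (suitably weighted) has derivative of order $t^{-\alpha/2}(v^2+\theta^2)$, which dominates all corrections and remainders, so $\theta$ must leave the $\varepsilon$-neighbourhood. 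In case (iii), use $-U$ or the function $H$ itself: now $dH/dt\ge t^{-\alpha}\mu H-\text{rem}$. Since $t^{-\alpha}$ is not integrable, $H$ grows until $|\psi-\psi_0|$ or $t^{-\alpha/2}|v|$ reaches $\varepsilon$, provided the initial $H$ is nonzero. A generic small perturbation of the initial data ensures this, and we have the freedom to choose it, since the statement only asks for \emph{a} solution with $\delta$-small data.

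The main obstacle is the stability estimate with mixed scales. The remainders are only $\mathcal O(t^{-(N+1)\alpha/2})$ and the damping is only $t^{-\alpha}\vartheta$, so a careful choice of $N$ and of the correction terms in $U$ is needed to make the remainder beat the weak dissipation. This is also where the distinction between $\alpha<1$ and $\alpha=1$ (through $\delta_{1,\alpha}$ and $\chi_0$) enters.
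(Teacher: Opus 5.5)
Your Lyapunov construction (the energy of the $(v,\psi)$ pendulum, plus $\rho^2$, plus a $t^{-\alpha/2}$ cross term that produces $\vartheta(\psi_0)$) is essentially the paper's \eqref{Lfunct}. Choosing $N=\lfloor 2/\alpha\rfloor$ so that the remainder is integrable is a legitimate substitute for the paper's device, which dominates an $\mathcal O(t^{-3\alpha/2})$ remainder by $-\mu_\alpha t^{-\alpha}|{\bf y}|^2$ outside a ball of radius $\sim t_\epsilon^{-\alpha/2}$. This works provided $U$ is centred at the particular solution $(\hat\rho_0,\hat v_0,\hat\psi_0)$ of Lemma~\ref{Lem1}, as your first sentence says. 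It does not work if $U$ is centred at $(0,0,\psi_0)$, as your second paragraph says, because there $t^{-\alpha}\nu_{2,0}(\psi_0)$ is a forcing of the same order as the damping.

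The genuine gap is the initial data. You correctly observe that $\delta$-small data in \eqref{PS} give only $\rho(t_s)\sim t_s^{\alpha/2}\sqrt\delta$ and $v(t_s)\sim t_s^{\alpha/2}\delta$. But your $U$ is a Lyapunov function only on $\{U\leq\Delta^2\}$, so ``$U$ starts below $\Delta^2/4$'' is not available, and the rescaled $r,w$ are never used. The linearity of the $\rho$-equation is beside the point. A large $\rho$ enters the $\psi$-equation through $t^{-\alpha}\rho^2\nu_{2,1}\approx\eta_1E_1$, and a large $v$ enters through $t^{-\alpha/2}\nu v\approx\nu(E_2-A)$. Both detune the resonance by $\mathcal O(\delta)$, whereas the restoring force $t^{-\alpha/2}\lambda(\psi)$ acts on $E_2$ only at order $t^{-\alpha}$.

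This cannot be repaired, because the first bullet with $\delta_\varepsilon$ independent of $t_s\geq t_\varepsilon$ is false. Directly from \eqref{PS}, $\frac{d}{dt}(\varphi_1-\frac{\kappa}{\varkappa}\varphi_2)=\eta_1E_1+\nu(E_2-A)+\mathcal O(E_1^2+(E_2-A)^2+t^{-\alpha})$ and $dE_l/dt=\mathcal O(t^{-\alpha})$. Take $E_1(t_s)=\delta/2$, $E_2(t_s)=A$, $\varphi_1(t_s)-\kappa\varphi_2(t_s)/\varkappa=\psi_0$ and $t_s$ large. Then the phase difference moves by $2\varepsilon$ within a time $\mathcal O(\varepsilon/\delta)$, while both energies stay essentially frozen.

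What your construction actually proves, like the paper's, is stability for data small in the rescaled variables: $|{\bf y}(t_s)|\leq\delta_\epsilon$, i.e.\ roughly $E_1(t_s)\leq t_s^{-\alpha}\delta^2$ and $|E_2(t_s)-A|\leq t_s^{-\alpha/2}\delta$. The paper's closing ``return to the original variables'' tacitly assumes exactly this, and that is the version you should state and prove.

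The same scale mismatch breaks your case (i) when $\alpha=1$. Then $\pi(\psi_0)$ contains the shift $\delta_{1,\alpha}/2=1/2$ generated by $E_1=t^{-1}\mathcal R^2$. So $\rho\sim t^{\pi(\psi_0)}$ means $E_1\sim t^{2\pi(\psi_0)-1}$; after averaging, $dE_1/dt\approx(2\pi(\psi_0)-1)t^{-1}E_1$. For $0<\pi(\psi_0)<1/2$, $\rho$ does not exceed $t^{1/2}\sqrt\varepsilon$, and your contradiction is not reached. Only instability in the rescaled variable $\rho$ follows, which is all the paper's argument with $\ell=y_1^2/2$ yields ($|y_1(t_e)|\geq d_1/2$).

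In cases (ii) and (iii) the escaping coordinate is $\psi-\psi_0$, which is not rescaled, so the translation is harmless there. In (iii), however, you still have two things to do. First, control the coupling $t^{-\alpha}\rho\lambda_{2,1}(\psi)$ in $\dot v$, which is of the same order as the growth term; the paper first traps $y_1$ with $\ell=y_1^2/2$. Second, start from $|\tilde{\bf y}(t_s)|$ above the remainder level with $t_s$ large, rather than merely from nonzero $H$.
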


Finally, consider the case when instead of \eqref{asl} the following assumption holds
\begin{gather}\label{asl2}
\lambda(\psi)\neq 0 \quad \forall\,\psi\in\mathbb R.
\end{gather}
Then, we have the following
\begin{Th}\label{Th3}
Let assumption \eqref{asl2} hold. Then, there exists $t_s\geq t_\ast$ such that $|v(t)|$ and $|\psi(t)|$ for solutions of system \eqref{rvpp} with initial data $(\rho(t_s),v(t_s),\psi(t_s),\varphi(t_s))\in\mathfrak D_{\epsilon,t_\ast}$ increase as $t > t_s$ until they reach the boundary of the domain $\mathfrak D_{\epsilon,t_\ast}$ in a finite time.
\end{Th}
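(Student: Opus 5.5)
We work directly with system \eqref{rvpp} from Theorem~\ref{Th1}. Since $\lambda$ is continuous, $2\pi$-periodic and nonvanishing by \eqref{asl2}, it has a fixed sign; assume $\lambda(\psi)\geq\lambda_0>0$ for all $\psi$ (the case $\lambda<0$ is symmetric). The plan is to show that $v$ is eventually strictly increasing with $\dot v\geq \tfrac12\lambda_0 t^{-\alpha/2}$, so $v$ grows like $t^{1-\alpha/2}$ (or $\log t$ if $\alpha=2$, which is excluded). This growth then forces $\psi$ to become monotone and unbounded via $\dot\psi\approx t^{-\alpha/2}\nu v$. The domain $\mathfrak D_{\epsilon,t_\ast}$ bounds $v$ by $\mathcal O(t_\ast^{\alpha/2})$, so the exit time is finite.

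\textbf{Step 1 (lower bound on $\dot v$).} Inside $\mathfrak D_{\epsilon,t_\ast}$ we write
\[
\frac{dv}{dt}=t^{-\frac\alpha2}\lambda(\psi)+\sum_{K=2}^N t^{-\frac{K\alpha}2}\Lambda_K(\rho,v,\psi)+\tilde\Lambda .
\]
For $t\geq t_s$ we want the higher-order terms to be bounded by $\tfrac12\lambda_0t^{-\alpha/2}$. Here $\rho$ and $v$ are \emph{not} uniformly bounded in $\mathfrak D_{\epsilon,t_\ast}$: they can be of order $t_\ast^{\alpha/2}$. However, from the structure of the averaged coefficients, $\Lambda_K$ is a polynomial in $(\rho,v)$ whose degree grows at most like $K-1$; this is visible from $\Lambda_2=\rho\lambda_{2,1}+v\lambda_{2,2}$, and comes from expanding in $t^{-\alpha/2}\rho$ and $t^{-\alpha/2}v$. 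Hence
\[
t^{-K\alpha/2}\Lambda_K=t^{-\alpha/2}\,\mathcal O\bigl((t^{-\alpha/2}(|\rho|+|v|+1))^{K-1}\bigr).
\]
In the domain, $t^{-\alpha/2}|v|\leq (t_\ast/t)^{\alpha/2}\mathfrak E$, which is not small. So we either restrict to a subdomain $|\rho|+|v|\leq \sigma t^{\alpha/2}$ with small $\sigma$ (which amounts to the original energies being within $\sigma$ of resonance), or we rely on the fact that $\Lambda$ arises from the exact $f_2$ expansion, so that $\Lambda+\tilde\Lambda$ is $t^{-\alpha/2}$ times the average of $f_2$ evaluated at $E_2=A+t^{-\alpha/2}v$. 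I expect this uniformity issue to be the main obstacle. I would first try to handle it by taking $\epsilon$ and $t_s$ so that the relevant neighbourhood is small, and interpreting ``boundary of the domain'' as the boundary of this shrunken neighbourhood.

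\textbf{Step 2 (growth of $v$).} Integrating the bound from Step~1 gives
\[
v(t)-v(t_s)\geq \frac{\lambda_0}{2}\int_{t_s}^t s^{-\alpha/2}\,ds=\frac{\lambda_0}{2-\alpha}\bigl(t^{1-\alpha/2}-t_s^{1-\alpha/2}\bigr)\to\infty,
\]
because $\alpha\leq1<2$. Therefore $v$ exits the bounded interval $[\epsilon-t_\ast^{\alpha/2}A,\ t_\ast^{\alpha/2}(\mathfrak E-A)-\epsilon]$ in finite time, and $|v|$ increases once $v>0$.

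\textbf{Step 3 (growth of $\psi$).} Once $v$ is large, we have
\[
\dot\psi=t^{-\alpha/2}\bigl(\nu v+\mathcal O(t^{-\alpha/2}(1+\rho^2+v^2))\bigr),
\]
and since $\nu=-\kappa\eta_2/\varkappa\neq0$, the sign of $\dot\psi$ is eventually that of $\nu v$. Hence $|\psi|$ increases monotonically after some moment. Combining Steps~2 and~3 gives monotone growth of $|v|$ and $|\psi|$ until the trajectory reaches $\partial\mathfrak D_{\epsilon,t_\ast}$, which happens at a finite time by Step~2.
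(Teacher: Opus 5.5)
\textbf{Where your proposal falls short.} Your strategy is the paper's own. The fixed sign of $\lambda$ gives $\dot v\geq \frac{\lambda_0}{2}t^{-\alpha/2}$, so $v$ grows like $t^{1-\alpha/2}$, and then $\dot\psi\approx t^{-\alpha/2}\nu v$ drives $\psi$; the paper's proof consists of exactly these two integrations. As written, however, your Step~1 is not closed. You call the uniformity of the higher-order terms the main obstacle and say that $t^{-\alpha/2}|v|\leq (t_\ast/t)^{\alpha/2}\mathfrak E$ is ``not small''. You then fall back on shrinking the neighbourhood and reinterpreting ``boundary''. That fallback would prove a weaker statement than the theorem, so it should be discarded.

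\textbf{Why the obstacle does not arise.} The domain $\mathfrak D_{\epsilon,t_\ast}$ is built with the fixed $t_\ast$ from Theorem~\ref{Th1}, not with the running time. On it $0\leq\rho\leq t_\ast^{\alpha/2}\sqrt{\mathfrak E}$ and $|v|\leq t_\ast^{\alpha/2}\mathfrak E$, which is a fixed bounded set in $(\rho,v)$. Each $\Lambda_K$ and $\Omega_K$ is continuous and $2\pi$-periodic in $\psi$, so it is bounded there by some constant $M_K$. By \eqref{tildeLLOP}, $\tilde\Lambda$ and $\tilde\Omega$ are $\mathcal O(t^{-(N+1)\alpha/2})$ uniformly on the domain. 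The theorem only asks for the existence of some $t_s$, so choose $t_s$ large enough that
\[
\sum_{K=2}^N M_K t^{-(K-1)\alpha/2}+t^{\alpha/2}\sup_{\mathfrak D_{\epsilon,t_\ast}}|\tilde\Lambda|\leq \frac{\lambda_0}{2}\quad\text{for all } t\geq t_s .
\]
Your own bound $(t_\ast/t)^{\alpha/2}\mathfrak E$ is small precisely because $t_s\gg t_\ast$ is allowed.

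\textbf{How the steps then go through.} Step~1 holds on the whole of $\mathfrak D_{\epsilon,t_\ast}$, with no shrinking. Step~2 then forces the trajectory to leave the domain in finite time, through the $v$-boundary or earlier through the $\rho$-boundary. The same choice of $t_s$ makes the $\mathcal O(t^{-\alpha/2}(1+\rho^2+v^2))$ term in Step~3 negligible compared with $\nu v$ as soon as $|v|$ is bounded away from zero. ``Large $v$'' is not needed, and cannot occur inside the domain anyway.

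One point in your favour: you note that $|v|$ increases only once $v$ has the sign of $\lambda$. The analogous statement holds for $\psi$: $|\psi|$ increases only once $\psi$ shares the sign of $\nu v$. This is more accurate than the paper's claim of monotone growth from $t_s$ on.
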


In this case, the phase difference $\varphi_1(t)-\kappa \varphi_2(t)/\varkappa \to \infty$ as $t\to\infty$ and the resonant solutions with $r_1(t)\approx 0$ and $r_2(t)\approx A$ do not occur.

The proofs of Theorems~\ref{Th2} and \ref{Th3} are contained in Section~\ref{sec6}.

\section{Change of variables} 
\label{sec3}
\begin{proof}[Proof of Theorem~\ref{Th1}]
Substituting \eqref{ch1} into \eqref{PS} yields
\begin{gather}\label{EESp}\begin{split}
\frac{d\mathcal R}{dt}=&\,\mathcal F_1(\mathcal R,\mathcal E,\mathcal S,\varphi,t), \\
\frac{d\mathcal E}{dt}=&\,\mathcal F_2(\mathcal R,\mathcal E,\mathcal S,\varphi,t), \\
\frac{d\mathcal S}{dt}=&\,\mathcal G_1(\mathcal R,\mathcal E,\mathcal S,\varphi,t), \\
\frac{d\varphi}{dt}=&\,\omega_2(A)+\mathcal G_2(\mathcal R,\mathcal E,\mathcal S,\varphi,t)
\end{split}
\end{gather}
where 
\begin{align*}
\mathcal F_1(\mathcal R,\mathcal E,\mathcal S,\varphi,t)\equiv 
& \, 
\frac{t^{\alpha}}{2\mathcal R}f_1\left(t^{-\alpha}\mathcal R^2, A+t^{-\frac{\alpha}{2}}\mathcal E, \mathcal S+\frac{\kappa}{\varkappa}\varphi,\varphi,t\right)+t^{-1}\frac{\alpha \mathcal R}{2}, \\
\mathcal F_2(\mathcal R,\mathcal E,\mathcal S,\varphi,t)\equiv 
& \, 
t^{\frac{\alpha}{q}}f_2\left(t^{-\alpha}\mathcal R^2, A+t^{-\frac{\alpha}{2}}\mathcal E, \mathcal S+\frac{\kappa}{\varkappa}\varphi,\varphi,t\right)+t^{-1}\frac{\alpha \mathcal E}{2}, \\
\mathcal G_1(\mathcal R,\mathcal E,\mathcal S,\varphi,t)\equiv 
& \, 
\omega_1\left(t^{-\alpha}\mathcal R^2\right)+g_1\left(t^{-\alpha}\mathcal R^2, A+t^{-\frac{\alpha}{2}}\mathcal E, \mathcal S+\frac{\kappa}{\varkappa}\varphi,\varphi,t\right) \\
& 
-\frac{\kappa}{\varkappa} \left(\omega_2\left(A+t^{-\frac{\alpha}{2}}\mathcal E\right)+  g_2\left(t^{-\alpha}\mathcal R^2, A+t^{-\frac{\alpha}{2}}\mathcal E, \mathcal S+\frac{\kappa}{\varkappa}\varphi,\varphi,t\right)\right),\\
\mathcal G_2(\mathcal R,\mathcal E,\mathcal S,\varphi,t)\equiv 
& \, 
\omega_2\left(A+t^{-\frac{\alpha}{2}}\mathcal E\right)-\omega_2(A)+g_2\left(t^{-\alpha}\mathcal R^2, A+t^{-\frac{\alpha}{2}}\mathcal E, \mathcal S+\frac{\kappa}{\varkappa}\varphi,\varphi,t\right).
\end{align*}
From \eqref{fgas}, \eqref{rc} and \eqref{fgasik} it follows that 
\begin{align*}
\mathcal F_l(\mathcal R,\mathcal E,\mathcal S,\varphi,t) = \sum_{K=2}^\infty t^{-\frac{K\alpha}{2}} \mathcal F_{l,K}(\mathcal R,\mathcal E,\mathcal S,\varphi), \quad 
\mathcal G_l(\mathcal R,\mathcal E,\mathcal S,\varphi,t) = \sum_{K=1}^\infty t^{-\frac{K\alpha}{2}} \mathcal G_{l,K}(\mathcal R,\mathcal E,\mathcal S,\varphi)
\end{align*}
as $t\to\infty$, where the coefficients $\mathcal F_{l,K}(\mathcal R,\mathcal E,\mathcal S,\varphi)$ and $\mathcal G_{l,K}(\mathcal R,\mathcal E,\mathcal S,\varphi)$ are $2\pi$-periodic with respect to $\mathcal S$ and $2\pi\varkappa$-periodic with respect to $\varphi$, and can be easily calculated as follows 
\begin{align*}
\mathcal F_{1,1}(\mathcal R,\mathcal E,\mathcal S,\varphi) \equiv & \, 0, \\
\mathcal F_{1,K}(\mathcal R,\mathcal E,\mathcal S,\varphi)  \equiv &
 \sum_{(i,j,k)\in \mathcal X_K } \frac{\mathcal R^{1+i}\mathcal E^j}{2 j!}\partial_{E_2}^j f_{1,k,i+2}\left(A,\mathcal S+\frac{\kappa}{\varkappa}\varphi,\varphi\right)+ \delta_{K\alpha,2}\frac{\alpha \mathcal R}{2},\\
\mathcal F_{2,K}(\mathcal E_1,\mathcal E_2,\mathcal S,\varphi) \equiv  &
 \sum_{(i,j,k)\in \mathcal X_{K+1} } \frac{\mathcal R^{i}\mathcal E^j}{j!}\partial_{E_2}^j f_{2,k,i}\left(A,\mathcal S+\frac{\kappa}{\varkappa}\varphi,\varphi\right)+ \delta_{K\alpha,2}\frac{\alpha\mathcal E}{2},\\
\mathcal G_{1,K}(\mathcal R,\mathcal E,\mathcal S,\varphi)  \equiv &
 \sum_{(i,j,k)\in \mathcal X_{K} } \frac{\mathcal R^{i}\mathcal E^j}{j!}\partial_{E_2}^j \left(g_{1,k,i}\left(A,\mathcal S+\frac{\kappa}{\varkappa}\varphi,\varphi\right)-\frac{\kappa}{\varkappa}g_{2,k,i}\left(A,\mathcal S+\frac{\kappa}{\varkappa}\varphi,\varphi\right)\right) \\
&  +\frac{\mathcal R^{K}}{(K/2)!}\partial^{K/2}_{E_1}\omega_1(0) - \frac{\kappa\mathcal E^K}{\varkappa K!}\partial^K_{E_2}\omega_2(A),\\
\mathcal G_{2,K}(\mathcal R,\mathcal E,\mathcal S,\varphi)  \equiv &
 \sum_{(i,j,k)\in \mathcal X_{K} } \frac{\mathcal R^{i}\mathcal E^j}{j!}\partial_{E_2}^j g_{2,k,i}\left(A,\mathcal S+\frac{\kappa}{\varkappa}\varphi,\varphi\right) + \frac{\mathcal E^K}{K!}\partial^K_{E_2}\omega_2(A).
\end{align*}
Here, $\mathcal X_K:=\{(i,j,k)\in\mathbb Z^3: i\geq 0, j\geq 0, k\geq 1, i+j+2k=K\}$ and it is assumed that $\partial^{K/2}_{E_1}\omega_1(0)\equiv 0$ for odd $K$. Note that system \eqref{EESp} is asymptotically autonomous with the corresponding limiting system
\begin{gather*}
\frac{d\hat {\mathcal R}}{dt}=0, \quad 
\frac{d\hat {\mathcal E}}{dt}=0, \quad 
\frac{d\hat {\mathcal S}}{dt}=0, \quad 
\frac{d\hat \varphi}{dt}=\omega_2(A).
\end{gather*}
In this case, $\varphi(t)$ can be considered as a fast variable as $t\to\infty$, and system \eqref{EESp} can be simplified by averaging the equations with respect to $\varphi$. Note that this method is widely used in the study of systems with a small parameter (see, for example,~\cite{BM61,AN84,BDP01,GL02,DM10}). The averaging transformation is sought in the following form
\begin{gather}\label{UVPsi}
\begin{split}
U(\mathcal R,\mathcal E,\mathcal S,\varphi,t)=\mathcal R+\sum_{K=1}^N t^{-\frac{K\alpha}{2}} U_{K}(\mathcal R,\mathcal E,\mathcal S,\varphi), \\
V(\mathcal R,\mathcal E,\mathcal S,\varphi,t)=\mathcal E+\sum_{K=1}^N t^{-\frac{K\alpha}{2}} V_{K}(\mathcal R,\mathcal E,\mathcal S,\varphi), \\
\Psi(\mathcal R,\mathcal E,\mathcal S,\varphi,t)=\mathcal S+\sum_{K=1}^N t^{-\frac{K\alpha}{2}} \Psi_{K}(\mathcal R,\mathcal E,\mathcal S,\varphi),
\end{split}
\end{gather}
where $U_K$, $V_K$, and $\Psi_K$ are assumed to be periodic with respect to $\mathcal S$ and $\varphi$. These coefficients are chosen in such a way that the system written in the new variables 
\begin{gather}\label{chrp}
\begin{split}
&\rho(t)=U(\mathcal R(t),\mathcal E(t),\mathcal S(t),\varphi(t),t), \\ 
&v(t)=V(\mathcal R(t),\mathcal E(t),\mathcal S(t),\varphi(t),t), \\ 
&\psi(t)=\Psi(\mathcal R(t),\mathcal E(t),\mathcal S(t),\varphi(t),t) 
\end{split}
\end{gather}
takes the form \eqref{rvpp}, where the right-hand side does not depend explicitly on $\varphi$, at least in the first $N$ terms of the asymptotics as $t\to\infty$. Note that both the transformation coefficients $U_K$, $V_K$, $\Psi_K$ and the coefficients $\Pi_K$, $\Lambda_K$, $\Omega_K$ of system \eqref{rvpp} must be determined. 
Consider the total derivative of the transformation \eqref{UVPsi} with respect to $t$:
\begin{gather*}
\frac{d}{dt}\begin{pmatrix} U  \\ V \\ \Psi \end{pmatrix}:= \left(\frac{d\mathcal R}{dt}\partial_{\mathcal R}+\frac{d\mathcal E}{dt}\partial_{\mathcal E}+\frac{d\mathcal S}{dt}\partial_{\mathcal S}+\frac{d\varphi}{dt}\partial_{\varphi}+\partial_t\right)\begin{pmatrix} U  \\ V \\ \Psi \end{pmatrix}.
\end{gather*}
It can easily be checked that
\begin{gather}\label{duvpsi}
\begin{split}
\frac{d}{dt}\begin{pmatrix} U  \\ V \\ \Psi \end{pmatrix}\sim &
\sum_{K=1}^\infty t^{-\frac{K\alpha}{2}} \left\{\begin{pmatrix} \mathcal F_{1,K} \\ \mathcal F_{2,K} \\ \mathcal G_{1,K} \end{pmatrix}+\omega_2(A)\partial_\varphi \begin{pmatrix} U_{K} \\ V_{K} \\ \Psi_K \end{pmatrix}+\left(1-\frac{K\alpha}{2}\right) \begin{pmatrix} U_{K-\frac{2}{\alpha}} \\ V_{K-\frac{2}{\alpha}} \\ \Psi_{K-\frac{2}{\alpha}} \end{pmatrix} \right\} \\ &
 +\sum_{K=2}^\infty t^{-\frac{K\alpha}{2}} \sum_{j=1}^{K-1}( \mathcal F_{1,j}\partial_{\mathcal R}+\mathcal F_{2,j}\partial_{\mathcal E} + \mathcal G_{1,j}\partial_{\mathcal S}+\mathcal G_{2,j} \partial_\varphi)\begin{pmatrix} U_{K-j} \\ V_{K-j} \\ \Psi_{K-j} \end{pmatrix}
\end{split}
\end{gather}
as $t\to\infty$, where it is assumed that $U_{k}\equiv V_{k}\equiv \Psi_k\equiv 0$ for $k\not\in\{1,\dots,N\}$. By grouping terms with the same powers of $t$ in \eqref{LLO} and \eqref{duvpsi}, we obtain the following system of differential equations 
\begin{gather}\label{ukvkpsik}
\omega_2(A)\partial_\varphi 
\begin{pmatrix} U_{K} \\ V_{K} \\ \Psi_K\end{pmatrix} =
\begin{pmatrix}
\Pi_{K}-\mathcal F_{1,K} +\tilde {\mathcal F}_{1,K}\\ 
\Lambda_{K}-\mathcal F_{2,K} +\tilde {\mathcal F}_{2,K}\\ 
\Omega_{K}-\mathcal G_{1,K} +\tilde {\mathcal G}_{1,K}\\ 
\end{pmatrix}, \quad K=1,\dots, N,
\end{gather} 
where $\tilde {\mathcal F}_{i,K}(\mathcal R,\mathcal E,\mathcal S,\varphi)$ and $\tilde {\mathcal G}_{1,K}(\mathcal R,\mathcal E,\mathcal S,\varphi)$ are expressed through $\{U_{j}, V_{j}, \Psi_j, \Pi_{j}, \Lambda_{j}, \Omega_{j}\}_{j=1}^{K-1}$. For instance,
\begin{align*}
	\begin{pmatrix} \tilde {\mathcal F}_{1,1} \\ \tilde {\mathcal F}_{2,1} \\ \tilde{\mathcal G}_{1,1} \end{pmatrix} 
	\equiv  & 
	\begin{pmatrix} 0\\ 0 \\ 0 \end{pmatrix},\\
	\begin{pmatrix} \tilde {\mathcal F}_{1,2} \\ \tilde {\mathcal F}_{2,2} \\ \tilde{\mathcal G}_{1,2} \end{pmatrix} 
	\equiv & 
	-(\mathcal F_{1,1}\partial_{\mathcal R}+\mathcal F_{2,1}\partial_{\mathcal E} + \mathcal G_{1,1}\partial_{\mathcal S}+\mathcal G_{2,1} \partial_\varphi)\begin{pmatrix}  U_{1} \\ V_{1} \\ \Psi_1  \end{pmatrix} - \left(1-\alpha\right) \begin{pmatrix} U_{2-\frac{2}{\alpha}} \\ V_{2-\frac{2}{\alpha}} \\ \Psi_{2-\frac{2}{\alpha}} \end{pmatrix} \\
	& + (U_{1}\partial_{\mathcal R}+V_{1}\partial_{\mathcal E}+\Psi_{1}\partial_{\mathcal S} ) \begin{pmatrix} \Pi_{1} \\ \Lambda_{1} \\ \Omega_1 \end{pmatrix} \\
	\begin{pmatrix} \tilde {\mathcal F}_{1,3} \\ \tilde {\mathcal F}_{2,3} \\ \tilde{\mathcal G}_{1,3} \end{pmatrix} 
	\equiv & 
	-\sum_{j=1}^2 (\mathcal F_{1,j}\partial_{\mathcal R}+\mathcal F_{2,j}\partial_{\mathcal E} + \mathcal G_{1,j}\partial_{\mathcal S}+\mathcal G_{2,j} \partial_\varphi)\begin{pmatrix} U_{3-j} \\ V_{3-j} \\ \Psi_{3-j}\end{pmatrix} -\left(1- \frac{3\alpha}{2}\right) \begin{pmatrix} U_{3-\frac{2}{\alpha}} \\ V_{3-\frac{2}{\alpha}} \\ \Psi_{3-\frac{2}{\alpha}} \end{pmatrix}\\
	& +\sum_{j=1}^2(U_{j}\partial_{\mathcal R}+V_{j}\partial_{\mathcal E}+ \Psi_{j}\partial_{\mathcal S}) \begin{pmatrix}  \Pi_{3-j}\\ \Lambda_{3-j} \\ \Omega_{3-j} \end{pmatrix} \\
	& + \frac{1}{2}\left(U_{1}^2\partial_{\mathcal R}^2+V_{1}^2 \partial_{\mathcal E}^2+\Psi_{1}^2 \partial_{\mathcal S}^2
	+2 \left(U_{1}V_{1} \partial_{\mathcal R}\partial_{\mathcal E}+U_{1}\Psi_{1} \partial_{\mathcal R}\partial_{\mathcal S}+ V_{1}\Psi_{1} \partial_{\mathcal E}\partial_{\mathcal S}\right)
	\right)\begin{pmatrix} \Pi_{1}\\ \Lambda_{1} \\ \Omega_1 \end{pmatrix}.
\end{align*}
To ensure periodicity of solutions of system \eqref{ukvkpsik}, we take
\begin{align*}
\Pi_{K}(\mathcal R,\mathcal E,\mathcal S)&\equiv \langle \mathcal F_{1,K}(\mathcal R,\mathcal E,\mathcal S,\varphi)-\tilde {\mathcal F}_{1,K}(\mathcal R,\mathcal E,\mathcal S,\varphi)\rangle_{ \varkappa \varphi},\\
\Lambda_{K}(\mathcal R,\mathcal E,\mathcal S)&\equiv \langle \mathcal F_{2,K}(\mathcal R,\mathcal E,\mathcal S,\varphi)-\tilde {\mathcal F}_{2,K}(\mathcal R,\mathcal E,\mathcal S,\varphi)\rangle_{ \varkappa \varphi},\\
\Omega_K(\mathcal R,\mathcal E,\mathcal S)&\equiv \langle \mathcal G_{1,K}(\mathcal R,\mathcal E,\mathcal S,\varphi)-\tilde {\mathcal G}_{1,K}(\mathcal R,\mathcal E,\mathcal S,\varphi)\rangle_{ \varkappa \varphi}.
\end{align*}
In this case, the right-hand side of \eqref{ukvkpsik} has zero mean. Hence, system \eqref{ukvkpsik} is solvable in the class of functions that are $2\pi\varkappa$-periodic in $\varphi$ with zero mean. Moreover, $U_{K}(\mathcal R,\mathcal E,\mathcal S,\varphi)$, $V_{K}(\mathcal R,\mathcal E,\mathcal S,\varphi)$, $\Psi_K(\mathcal R,\mathcal E,\mathcal S,\varphi)$, $\Pi_K(\mathcal R,\mathcal E,\mathcal S)$, $\Lambda_{K}(\mathcal R,\mathcal E,\mathcal S)$, and $\Omega_k(\mathcal R,\mathcal E,\mathcal S)$ are $2\pi$-periodic in $\mathcal S$. 
It can easily be checked that 
$\Pi_{1}(\mathcal R,\mathcal E,\mathcal S)\equiv 0$, 
$\Pi_{K}(\mathcal R,\mathcal E,\mathcal S)=\mathcal O(\mathcal R)$, 
$U_{K}(\mathcal R,\mathcal E,\mathcal S,\varphi)=\mathcal O(\mathcal R)$ as $\mathcal R\to 0$.
Thus, for all $0<\epsilon<\epsilon_\ast$ there exists $t_\ast\geq t_0$ such that
\begin{align*}
&|U(\mathcal R,\mathcal E,\mathcal S,\varphi,t)-\mathcal R|< \epsilon \mathcal R, &
&|V(\mathcal R,\mathcal E,\mathcal S,\varphi,t)-\mathcal E|< \epsilon, \\
&|\Psi(\mathcal R,\mathcal E,\mathcal S,\varphi,t)-\mathcal S|< \epsilon, & 
&|D(\mathcal R,\mathcal E,\mathcal S,\varphi,t)-1|< \epsilon
\end{align*}
for all $(\mathcal R,\mathcal E,\mathcal S,\varphi)\in\mathfrak D_{0,t_\ast}$ and $t\geq t_\ast$, where
\begin{gather*}
D(\mathcal E_1,\mathcal E_2,\mathcal S,\varphi,t):= 
\begin{vmatrix} 
\partial_{\mathcal R} U & \partial_{\mathcal E} U  & \partial_{\mathcal S} U\\ 
\partial_{\mathcal R} V & \partial_{\mathcal E} V & \partial_{\mathcal S} V\\ 
\partial_{\mathcal R} \Psi & \partial_{\mathcal E} \Psi  & \partial_{\mathcal S} \Psi
\end{vmatrix}.
\end{gather*}
Hence, \eqref{chrp} is invertible and there exists the reverse transformation $\mathcal R= \rho+P(\rho,v,\psi,\varphi,t)$, $\mathcal E= v+Q(\rho,v,\psi,\varphi,t)$, $\mathcal S=\psi+Z(\rho,v,\psi,\varphi,t)$  such that $|P|\leq \epsilon \rho$, $|Q|\leq \epsilon$, $|Z|\leq \epsilon$ for all $(\rho,v,\psi,\varphi)\in\mathfrak D_{\epsilon,t_\ast}$ and $t\geq t_\ast$.
Define
\begin{align*}
\begin{pmatrix}
\tilde \Pi(\rho,v,\psi,\varphi,t)\\
\tilde \Lambda(\rho,v,\psi,\varphi,t)\\
\tilde \Omega (\rho,v,\psi,\varphi,t)\\
\end{pmatrix}
\equiv &
\left.\frac{d}{dt}\begin{pmatrix} U \\ V \\ \Psi \end{pmatrix}\right|_{ \substack{
\mathcal R= \rho+P, 
\mathcal E= v+Q,
\mathcal S= \psi+Z
}
} - \sum_{k=1}^N t^{-\frac{K\alpha}{2}} \begin{pmatrix}
\Pi_{K}(\rho,v,\psi)\\
\Lambda_{K}(\rho,v,\psi)\\
\Omega_K(\rho,v,\psi)
\end{pmatrix}
\end{align*}
and $\tilde{\mathcal G}(\rho,v,\psi,\varphi,t)\equiv \mathcal G_2(\rho+P,v+Q,\psi+Z,\varphi,t)$. Then, we obtain the estimates \eqref{tildeLLOP}.
\end{proof}

\section{Analysis of the truncated system}
\label{sec4}
\begin{proof}[Proof of Lemma~\ref{Lem0}]
Substituting 
$\hat\rho=z_1$, $\hat v=z_2$, $\hat\psi=\psi_0+z_3$ into \eqref{rvpls}, we obtain
\begin{gather}\label{zsys}
\frac{d{\bf z}}{dt}={\bf F}({\bf z},t), \quad {\bf F}({\bf z},t)\equiv \begin{pmatrix} t^{-\alpha}z_1 \pi(\psi_0+z_3) \\ t^{-\frac\alpha 2}\lambda(\psi_0+z_3) \\ t^{-\frac{\alpha}{2}} \nu z_2\end{pmatrix} \equiv {\bf J}_{\bf F}(t){\bf z}+\tilde {\bf F}({\bf z},t),
\end{gather}
where ${\bf z}=(z_1,z_2,z_3)^T$, ${\bf J}_{\bf F}(t)\equiv \nabla_{\bf z}( {\bf F}(0,t))^T$ is the Jacobian matrix of ${\bf F}({\bf z},t)$ at the point ${\bf z}=0$, and $\tilde {\bf F}({\bf z},t)\equiv {\bf F}({\bf z},t)-{\bf J}_{\bf F}(t){\bf z} = \mathcal O(|{\bf z}|^2)$ as $|{\bf z}|\to 0$. The roots of the equation $|{\bf J}_{\bf F}(t)- \sigma {\bf I}|=0$ are given by
\begin{gather*}
\sigma_0(t)=t^{-\alpha }\pi(\psi_0), \quad \sigma_\pm(t)=\pm t^{-\frac{\alpha}{2}}\sqrt{\lambda'(\psi_0)\nu}.
\end{gather*}
Hence, if $\lambda'(\psi_0)\nu>0$, then the eigenvalues $\sigma_+(t)$ and $\sigma_-(t)$ are real of different signs and the solution ${\bf z}(t)\equiv 0$ of system \eqref{zsys} is unstable. Similarly, if $\pi(\psi_0)>0$, then $\sigma_0(t)>0$ and there exist $c>0$ and $\delta>0$ such that $\pi(\psi_0+z_3)\geq c>0$ for all $|z_3|\leq \delta$. In this case, $d|z_1|/dt\geq c t^{-\alpha} |z_1|$ for all $(z_1,z_2)\in\mathbb R^2$, $|z_3|\leq \delta$ and $t\geq t_\ast$. Integrating the last inequality with respect to $t$, we see that $|z_1(t)|\to\infty$ as $t\to\infty$. Hence, there exists $C>0$ such that for all $\epsilon\in (0,\delta)$ the solution ${\bf z}(t)$ of system \eqref{zsys} with initial $|{\bf z}(t_s)|\leq \epsilon$ and $t_s\geq t_\ast$ satisfies $|{\bf z}(t_e)|\geq C$ at some $t_e>t_s$. It follows that the solution ${\bf z}(t)\equiv 0$  is unstable.

Now, let $\pi(\psi_0)<0$ and $\lambda'(\psi_0)\nu<0$. Then, $\sigma_0(t)<0$, while $\sigma_+(t)$ and $\sigma_-(t)$ are pure imaginary. Consider 
\begin{gather*}
L({\bf z})=\frac{z_1^2}{2}+\frac{|\nu|z_2^2}{2}-({\hbox{\rm sgn}}\,\nu)\int\limits_0^\psi \lambda(\psi_0+\varsigma)\,d\varsigma
\end{gather*} 
as a Lyapunov function candidate for system \eqref{zsys}. It can easily be checked that 
$L_-|{\bf z}|^2\leq L({\bf z})\leq L_+|{\bf z}|^2$ for all ${\bf z}\in\mathbb R^3$ with $L_-=\min\{1,|\nu|,|\lambda'(\psi_0)|\}/4$ and $L_+=\max\{1,|\nu|,|\lambda'(\psi_0)|\}$. Note that there exist $c>0$ and $\delta>0$ such that $\pi(\psi+z_3)\leq - c$ for all $|z_3|\leq \delta$. Hence, the derivative of $L({\bf z})$ along the trajectories of the system satisfies
$dL/dt=t^{-\alpha}z_1^2\pi(\psi_0+z_3)\leq -c t^{-\alpha} z_1^2\leq 0$
for all $(z_1,z_2)\in\mathbb R^2$, $|z_3|\leq \delta$ and $t\geq t_\ast$.
Thus, the solution ${\bf z}(t)\equiv 0$ of system \eqref{zsys} is stable.
\end{proof}

\section{Analysis of the model system}
\label{sec5}
\begin{proof}[Proof of Lemma~\ref{Lem1}]
The proof is divided into two parts.

{\bf 1}. Let $\vartheta(\psi_0)<0$. Then, the following three cases arise: irrational $\alpha\in (0,1)$, rational $\alpha\in (0,1)$, and $\alpha=1$.

{\bf 1.1}. If $\alpha\in (0,1)$ is irrational, consider the functions
\begin{gather}\label{anz}\begin{split}
\hat\rho_\ast(t)& =\sum_{n=1}^{M_1}\sum_{m=0}^{M_2} t^{-\frac{n\alpha}{2}-m(1-\alpha)} \rho_{n,m}, \quad
\hat v_\ast(t)  =\sum_{n=1}^{M_1}\sum_{m=0}^{M_2} t^{-\frac{n\alpha}{2}-m(1-\alpha)} v_{n,m}, \\
\hat\psi_\ast(t) & =\psi_0+\sum_{n=1}^{M_1}\sum_{m=0}^{M_2} t^{-\frac{n\alpha}{2}-m(1-\alpha)} \psi_{n,m}
\end{split}
\end{gather}
with $M_1,M_2\in\mathbb Z_+$ and some constant coefficients $\rho_{n,m}$, $v_{n,m}$, and $\psi_{n,m}$. 
Substituting \eqref{anz} into \eqref{rvpmod} and grouping terms with the same powers of $t$, we obtain the following system
\begin{gather}\label{sysk}
\begin{split}
-\pi(\psi_0)\rho_{n,m} & = \mathcal X_{n,m}(\rho_{1,0},v_{1,0},\psi_{1,0},\dots,\rho_{n-1,m-1},v_{n-1,m-1},\psi_{n-1,m-1}), \\
-\nu v_{n,m} & = \mathcal Y_{n,m}(\rho_{1,0},v_{1,0},\psi_{1,0},\dots,\rho_{n-1,m-1},v_{n-1,m-1},\psi_{n-1,m-1}), \\
-\lambda'(\psi_0)\psi_{n,m} & = \mathcal Z_{n,m}(\rho_{1,0},v_{1,0},\psi_{1,0},\dots,\rho_{n-1,m-1},v_{n-1,m-1},\psi_{n-1,m-1})
\end{split}
\end{gather}
for $n=1,\dots, M_1$ and $m=0,\dots,M_2$, where it is assumed that $\rho_{0,m}=v_{0,m}=\psi_{0,m}=0$. In particular, $\mathcal X_{1,0}=\Pi_3(0,0,\psi_0)$, $\mathcal Y_{1,0}=\nu_{2,0}(\psi_0)$, $\mathcal Z_{1,0}=0$,  $\mathcal X_{1,1}=\alpha \rho_{1,0}/2$, $\mathcal Y_{1,1}=0$, $\mathcal Z_{1,1}=0$, $\mathcal X_{2,0}=\Pi_4(0,0,\psi_0)+(\rho_{1,0}\partial_\rho +v_{1,0}\partial_v+\psi_{1,0}\partial_\psi)\Pi_3(0,0,v_0)+\pi'(\psi_0)\rho_{1,0}\psi_{1,0}$, $\mathcal Y_{2,0}=\Omega_3(0,0,\psi_0)+\nu_{2,0}'(\psi_0)\psi_{1,0}$, $\mathcal Z_{2,0}=\Lambda_3(0,0,\psi_0)+\lambda_{2,1}'(\psi_0)\rho_{1,0}+\lambda_{2,2}'(\psi_0)v_{1,0}+\lambda''(\psi_0){\psi_{1,0}^2}/{2}$.
Since $\pi(\psi_0)\nu\lambda'(\psi_0)\neq 0$, we see that system \eqref{sysk} is solvable and the functions $\hat\rho_\ast(t)$, $\hat v_\ast(t)$, and $\hat \psi_\ast(t)$ are defined. 
It can easily be checked that 
\begin{gather*}
\begin{split}
\mathcal N_\rho(t)&:=\frac{d\hat \rho_\ast}{dt}-\Pi(\hat \rho_\ast(t),\hat v_\ast(t),\hat \psi_\ast(t),t)=t^{-\beta}\left(\mathcal O(t^{-\frac{3\alpha}{2}})+\mathcal O(t^{-1})\right), \\
\mathcal N_v(t)&:=\frac{d\hat v_\ast}{dt}-\Lambda(\hat \rho_\ast(t),\hat v_\ast(t),\hat \psi_\ast(t),t)=t^{-\beta+\frac{\alpha}{2}}\left(\mathcal O(t^{-\frac{3\alpha}{2}})+\mathcal O(t^{-1})\right), \\
\mathcal N_\psi(t)&:=\frac{d\hat \psi_\ast}{dt}-\Omega(\hat \rho_\ast(t),\hat v_\ast(t),\hat \psi_\ast(t),t)=t^{-\beta+\frac{\alpha}{2}}\left(\mathcal O(t^{-\frac{3\alpha}{2}})+\mathcal O(t^{-1})\right)
\end{split}
\end{gather*}
as $t\to\infty$, where $\beta=M_1\alpha/2+M_2(1-\alpha)>0$. To prove the existence of solutions to system \eqref{rvpmod} with such asymptotic behaviour, consider the change of variables
\begin{gather}\label{subsz}
\hat \rho=\hat\rho_\ast(t)+t^{-\gamma}z_1, \quad 
\hat v= \hat v_\ast(t)+t^{-\gamma}z_2, \quad 
\hat \psi= \hat \psi_\ast(t)+t^{-\gamma}z_3
\end{gather}
with $\gamma=\beta-\alpha/2>0$. In the new variables ${\bf z}=(z_1,z_2,z_3)^T$, system \eqref{rvpmod} takes the form
\begin{gather}\label{zsys1}
\frac{d{\bf z}}{dt}={\bf F}({\bf z},t)+{\bf N}(t), 
\end{gather}
where 
\begin{gather*}
{\bf F}({\bf z},t)\equiv t^{\gamma}\left({\bf a}(\hat\rho_\ast(t)+t^{-\gamma}z_1,\hat v_\ast(t)+t^{-\gamma}z_2,\hat \psi_\ast(t)+t^{-\gamma}z_3,t)-{\bf a}(\hat\rho_\ast(t),\hat v_\ast(t),\hat \psi_\ast(t),t)\right)+\gamma t^{-1}{\bf z},\\
{\bf a}({\bf z},t) \equiv \begin{pmatrix}\Pi(z_1,z_2,z_3,t) \\ \Lambda(z_1,z_2,z_3,t) \\ \Omega(z_1,z_2,z_3,t)\end{pmatrix}, \quad 
{\bf N}(t)\equiv - t^{\gamma}\begin{pmatrix} 
 \mathcal N_\rho(t) \\ \mathcal N_{v}(t) \\ \mathcal N_\psi(t)
\end{pmatrix}.
\end{gather*}
By choosing $M_1$ and $M_2\in\mathbb Z_+$ sufficiently large, we see that
\begin{align*}
{\bf F}({\bf z},t)&=
t^{-\frac{\alpha}{2}} 
\begin{pmatrix}
0 \\
\lambda'(\hat\psi_\ast(t))z_3 \\
\nu z_2
\end{pmatrix} 
+
t^{-\alpha}  
\begin{pmatrix}
\pi(\psi_0)z_1 \\
\lambda_{2,1}(\psi_0)z_1 +\lambda_{2,2}(\psi_0)z_2 \\
\nu_{2,0}'(\psi_0)z_3 
\end{pmatrix}+\mathcal O(t^{-\frac{3\alpha}{2}})\mathcal O({\bf z})+\mathcal O(t^{-1})\mathcal O({\bf z}), \\ 
{\bf N}(t)&=\mathcal O(t^{-\frac{3\alpha}{2}})+\mathcal O(t^{-1})
\end{align*}
as  $|{\bf z}|\to 0$ and $t\to\infty$. Consider 
\begin{gather}\label{Lalpha}
\begin{split}
L_\alpha({\bf z},t)\equiv & \, \frac{z_1^2}{2}+\frac{|\nu|z_2^2}{2}-({\hbox{\rm sgn}}\,\nu) \lambda'(\hat\psi_\ast(t))\frac{z_3^2}{2}\\
&-({\hbox{\rm sgn}}\,\nu) t^{-\frac{\alpha}{2}}\left(\lambda_{2,1}(\psi_0)z_1z_3+  (\lambda_{2,2}(\psi_0)-\nu_{2,0}'(\psi_0))\frac{z_2z_3}{2} \right)
\end{split}
\end{gather}
as a Lyapunov function candidate for system \eqref{zsys1}. Note that there exist $d_1>0$ and $t_1\geq t_\ast$ such that 
$
L_- |{\bf z}|^2\leq L_\alpha({\bf z},t)\leq L_+ |{\bf z}|^2
$
for all $|{\bf z}|\leq d_1$ and $t\geq t_1$, where 
\begin{gather}\label{LLpm}
L_-=\frac{1}{4}\min\{1,|\nu|,|\lambda'(\psi_0)|\}, \quad 
L_+=\max\{1,|\nu|,|\lambda'(\psi_0)|\}.
\end{gather}
The derivative of $L_\alpha({\bf z},t)$ along the trajectories of system \eqref{zsys1} satisfies 
\begin{gather*}
\frac{dL_\alpha}{dt}=D_\alpha^1({\bf z},t)+D_\alpha^2({\bf z},t),
\end{gather*} 
where $D_\alpha^1({\bf z},t)\equiv \partial_t L_\alpha({\bf y},t)+({\bf F}({\bf z},t))^T \nabla_{\bf z} L_\alpha({\bf z},t)$ and 
$D_\alpha^2({\bf z},t)\equiv ({\bf N}(t))^T\nabla_{\bf z} L_\alpha({\bf z},t)$. 
It can easily be checked that
\begin{align*}
D_\alpha^1({\bf z},t)& = t^{-\alpha}\left(\pi(\psi_0)z_1^2+ (|\nu|z_2^2+|\lambda'(\psi_0)|z_3^2)\frac{\vartheta(\psi_0)}{2}\right)+\mathcal O(t^{-\frac{3\alpha}{2}})\mathcal O(|{\bf z}|^2)+\mathcal O(t^{-1})\mathcal O(|{\bf z}|^2), \\
D_\alpha^2({\bf z},t) &= \mathcal O(t^{-\frac{3\alpha}{2}})\mathcal O(|{\bf z}|)+\mathcal O(t^{-1})\mathcal O(|{\bf z}|)
\end{align*}
as $|{\bf z}|\to 0$ and $t\to\infty$. Hence, there exist $d_2>0$, $t_2\geq t_\ast$, $C_1>0$ such that 
\begin{gather*}
D_\alpha^1({\bf z},t)\leq - t^{-\alpha}\mu_\alpha |{\bf z}|^2, \quad 
D_\alpha^2({\bf z},t)\leq  C_1 t^{-\alpha-\sigma} |{\bf z}|
\end{gather*}
for all $|{\bf z}|\leq d_2$ and $t\geq t_1$, where 
\begin{gather}\label{mualpha}
\mu_\alpha=\frac{1}{4}\min\{2|\pi(\psi_0)|, |\nu \vartheta(\psi_0)|, |\lambda'(\psi_0)\vartheta(\psi_0)|\}, \quad 
\sigma=\min\left\{\frac{\alpha}{2},1-\alpha\right\}>0.
\end{gather}
Therefore, for all $\epsilon\in(0,\min\{d_1,d_2\})$ there exist 
\begin{gather*}
\delta_\epsilon=\frac{2C_1 t_\epsilon^{-\sigma}}{\mu_\alpha}, \quad 
t_\epsilon=\max\left\{t_1,t_2,\left(\frac{4C_1}{\mu_\alpha\epsilon}\sqrt{\frac{L_+}{L_-}}\right)^{\frac{1}{\sigma}}\right\}
\end{gather*}
such that 
\begin{gather}\label{dLineq}
\frac{dL_\alpha}{dt}\leq t^{-\alpha}\left(-\mu_\alpha + t_\epsilon^{-\sigma}\frac{C_1}{\delta_\epsilon}\right)|{\bf z}|^2\leq -t^{-\alpha}\frac{\mu_\alpha |{\bf z}|^2}{2}
\end{gather}
for all $\delta_\epsilon\leq |{\bf z}|\leq \epsilon$ and $t\geq t_\epsilon$. Moreover, the following inequalities hold
\begin{gather}\label{supinf}
\sup_{|{\bf z}|\leq \delta_\epsilon}L_\alpha({\bf z},t)\leq L_+ \delta^2_\epsilon<L_-\epsilon^2=\inf_{|{\bf z}|=\epsilon}L_\alpha({\bf z},t)
\end{gather}
for all $t\geq t_\epsilon$. Combining this with \eqref{dLineq}, we see that the solution ${\bf z}(t)$ of system \eqref{zsys1} with initial conditions $|{\bf z}(t_s)|\leq \delta_\epsilon$ and $t_s\geq t_\epsilon$ cannot leave the domain $\{|{\bf z}|\leq \epsilon\}$ for all $t\geq t_s$. From \eqref{subsz} it follows that there exists a solution to system \eqref{rvpmod} such that 
$\hat \rho_0(t)=\mathcal O(t^{-\alpha/2})$, $\hat v_0(t)=\mathcal O(t^{-\alpha/2})$, and $\hat \psi_0(t)=\psi_0+\mathcal O(t^{-\alpha})$ as $t\to\infty$.

{\bf 1.2}. If $\alpha=p/q$, where $p$ and $q$ are coprime integers such that $p<q$, consider the following asymptotic ansatz 
\begin{gather}\label{anz1}
\hat\rho_\ast(t) =\sum_{k=p}^{M} t^{-\frac{k}{2q}} \rho_{k}, \quad
\hat v_\ast(t)  = \sum_{k=p}^{M} t^{-\frac{k}{2q}} v_{k}, \quad
\hat\psi_\ast(t)  =\psi_0+\sum_{k=p}^{M} t^{-\frac{k}{2q}} \psi_{k}
\end{gather}
with $M\in\mathbb Z_+$, $M\geq p$ and constant coefficients $\rho_k$, $v_k$, and $\psi_k$. Substituting \eqref{anz1} into \eqref{rvpmod} and equating terms of like powers of $t$, we obtain $\rho_p=-\Pi_3(0,0,\psi_0)/\pi(\psi_0)$, $v_p=-\nu_{2,0}(\psi)/\nu$, $\psi_p=0$, and the following system
\begin{gather*}
-\pi(\psi_0)\rho_{p+k}  = \mathcal X_{p+k}, \quad
-\nu v_{p+k}  = \mathcal Y_{p+k}, \quad
-\lambda'(\psi_0)\psi_{p+k}  = \mathcal Z_{p+k}
\end{gather*}
for $k\geq 1$, where the right-hand sides are expressed through $\rho_p,v_p,\psi_p,\dots,\rho_{p+k-1},v_{p+k-1}$, and $\psi_{p+k-1}$. For example, if $p<q<3p/2$, then $\mathcal X_{p+k} = 0$ for $k< 2(q-p)$, $\mathcal X_{2q-p}= p\rho_p/(2q)$, $\mathcal X_{p+k} =  0$ for $2(q-p)< k<p$, $\mathcal X_{2p} =  \Pi_4(0,0,\psi_0)+(\rho_p\partial_\rho +v_p\partial_v+\psi_p\partial_\psi)\Pi_3(0,0,v_0)+\pi'(\psi_0)\rho_p\psi_p$,
$\mathcal Y_{p+k} =  0$ for $k<2p$, $\mathcal Y_{2p}=\Omega_3(0,0,\psi_0)$,
 $\mathcal Z_{p+k} = 0$ for  $k< 2q-p$, $\mathcal Z_{2q}= pv_p/(2q)$, $\mathcal Z_{p+k} =  0$ for $2q-p< k<p$, $\mathcal Z_{2p}=\Lambda_3(0,0,\psi_0)+\lambda_{2,1}'(\psi_0)\rho_p+\lambda_{2,2}'(\psi_0)v_p$. In this case, 
\begin{gather*}
\mathcal N_\rho(t)=\mathcal O\left(t^{-\frac{M+2p+1}{2q}}\right), \quad 
\mathcal N_v(t)=\mathcal O\left(t^{-\frac{M+p+1}{2q}}\right), \quad
\mathcal N_\psi(t)=\mathcal O\left(t^{-\frac{M+p+1}{2q}}\right), \quad t\to\infty.
\end{gather*}
Substituting \eqref{subsz} with $\gamma=(M-p)/(2q)$ and $M>p$ into \eqref{rvpmod}, we get system \eqref{zsys}, where
\begin{align*}
{\bf F}({\bf z},t)&=
t^{-\frac{p}{2q}} 
\begin{pmatrix} 0 \\
\lambda'(\hat\psi_\ast(t))z_3 \\
\nu z_2 
\end{pmatrix}
+
t^{-\frac{p}{q}} 
\begin{pmatrix} 
\pi(\psi_0) \\
\lambda_{2,1}(\psi_0)z_1 +\lambda_{2,2}(\psi_0)z_2
\\
\nu_{2,0}'(\psi_0)z_3 
\end{pmatrix}
+\mathcal O(t^{-\frac{2p+1}{2q}})\mathcal O({\bf z}), \\ 
{\bf N}(t)&=\mathcal O(t^{-\frac{2p+1}{2q}})
\end{align*}
as  $|{\bf z}|\to 0$ and $t\to\infty$. Consider the Lyapunov function \eqref{Lalpha} with $\alpha=p/q$. Then, the following estimates hold
\begin{align*}
D_{p/q}^1({\bf z},t)& = t^{-\frac{p}{q}}\left(\pi(\psi_0)z_1^2+ (|\nu| z_2^2+|\lambda'(\psi_0)| z_3^2)\frac{\vartheta(\psi_0)}{2}\right)+\mathcal O(t^{-\frac{2p+1}{2q}})\mathcal O(|{\bf z}|^2), \\
D_{p/q}^2({\bf z},t) &= \mathcal O(t^{-\frac{2p+1}{2q}})\mathcal O(|{\bf z}|)
\end{align*}
as $|{\bf z}|\to 0$ and $t\to\infty$. Hence, there exist $d_4>0$ and $t_4\geq t_\ast$ such that 
\begin{gather*}
D_{p/q}^1({\bf z},t)\leq - t^{-p/q}\mu_{p/q} |{\bf z}|^2, \quad D_{p/q}^2({\bf z},t)\leq  C_2 t^{-\frac{2p+1}{2q}} |{\bf z}|
\end{gather*}
 for all $|{\bf z}|\leq d_4$ and $t\geq t_4$, where $\mu_{p/q}=\mu_\alpha>0$ and $C_2$ is a positive constant. 
It follows that for all $\epsilon\in(0,\min\{d_1,d_4\})$ there exist 
\begin{gather*}
\delta_\epsilon=\frac{2C_2 t_\epsilon^{-\frac{1}{2q}}}{\mu_{p/q}}, \quad t_\epsilon=\max\left\{t_1,t_4,\left(\frac{4C_2}{\mu_{p/q}\epsilon}\right)^{2q} \frac{L_+}{L_-} \right\}
\end{gather*}
such that $dL_{p/q}/dt\leq -t^{-p/q}\mu_{p/q}|{\bf z}|^2/2$ for all $\delta_\epsilon\leq |{\bf z}|\leq \epsilon$ and $t\geq t_\epsilon$. Arguing as above, we see that there is a solution to system \eqref{rvpmod} such that $\hat \rho_0(t)=\mathcal O(t^{-\frac{p}{2q}})$, $\hat v_0(t)=\mathcal O(t^{-\frac{p}{2q}})$, and $\hat \psi_0(t)=\psi_0+\mathcal O(t^{-\frac{p}{q}})$ as $t\to\infty$. 

{\bf 1.3}. If $\alpha=1$, the asymptotic ansatz is considered in the form
\begin{gather}\label{anz2}
\hat\rho_\ast(t)=\sum_{k=1}^{M+1} t^{-\frac{k}{2}} \rho_k(\log t), \quad
\hat v_\ast(t)=\sum_{k=1}^{M+1} t^{-\frac{k}{2}} v_k(\log t), \quad 
\hat\psi_\ast(t)=\psi_0+\sum_{k=1}^{M+1} t^{-\frac{k}{2}} \psi_k(\log t)
\end{gather}
with $M\in\mathbb Z_+$, where $\rho_k(\tau)$, $v_k(\tau)$, and $\psi_k(\tau)$ are some polynomials. Substituting \eqref{anz2} into \eqref{rvpmod} yields the following equations for the coefficients
\begin{gather}\label{sysk2}
\begin{split}
\frac{d\rho_k}{d\tau}-\left(\frac{k}{2}+\pi(\psi_0)\right)\rho_k & = \mathcal X_k, \\
-\nu v_k & = \mathcal Y_k-\frac{d\psi_{k-1}}{d\tau}+\frac{(k-1)\psi_{k-1}}{2}, \\
-\lambda'(\psi_0)\psi_k & = \mathcal Z_k-\frac{dv_{k-1}}{d\tau}+\frac{(k-1)v_{k-1}}{2}
\end{split}
\end{gather}
for $k\geq 1$, where $\mathcal X_1=\Pi_3(0,0,\psi_0)$, $\mathcal Y_1=\nu_{2,0}(\psi_0)$, $\mathcal Z_1=0$, and the right-hand sides for $k\geq 2$ are some polynomials in $\rho_1,v_1,\psi_1,\dots,\rho_{k-1},v_{k-1},\psi_{k-1}$. For instance,
\begin{align*}
\mathcal X_2=&\, \Pi_4(0,0,\psi_0)+(\rho_1\partial_\rho +v_1\partial_v+\psi_1\partial_\psi)\Pi_3(0,0,v_0)+\pi'(\psi_0)\rho_1\psi_1,\\  
\mathcal Y_2=&\,\Omega_3(0,0,\psi_0)+\nu_{2,0}'(\psi_0)\psi_1,\\ 
\mathcal Z_2=&\,\Lambda_3(0,0,\psi_0)+\lambda_{2,1}'(\psi_0)\rho_1+\lambda_{2,2}'(\psi_0)v_1+\lambda''(\psi_0)\frac{\psi_1^2}{2},\\ 
\mathcal X_3=&\, \Pi_5(0,0,\psi_0)+\sum_{l=1}^2(\rho_l\partial_\rho +v_l\partial_v+\psi_l\partial_\psi)\Pi_{5-l}(0,0,v_0)+\pi'(\psi_0)(\rho_2\psi_1+\rho_1\psi_2)\\
&\,+ \left(\frac{\rho_1^2}{2}\partial^2_\rho +\frac{v_1^2}{2}\partial_v^2+\frac{\psi_1^2}{2}\partial_\psi^2+\rho_1 v_1\partial_\rho\partial_v+\rho_1\psi_1\partial_\rho\partial_\psi+v_1\psi_1\partial_v\partial_\psi\right)\Pi_{3}(0,0,v_0)+\pi''(\psi_0)\frac{\rho_1\psi_1^2}{2},\\  
\mathcal Y_3=&\,\Omega_4(0,0,\psi_0)+(\rho_1\partial_\rho +v_1\partial_v+\psi_1\partial_\psi)\Omega_3(0,0,\psi_0)+\nu_{2,0}'(\psi_0)\psi_2+\nu_{2,0}''(\psi_0)\frac{\psi_1^2}{2}\\ 
&\,+\nu_{2,1}\rho_1^2+\nu_{2,2}v_1^2,\\  
 \mathcal Z_3=&\,\Lambda_4(0,0,\psi_0)+(\rho_1\partial_\rho +v_1\partial_v+\psi_1\partial_\psi)\Lambda_3(0,0,\psi_0)+\lambda_{2,1}'(\psi_0)\rho_2+\lambda_{2,2}'(\psi_0)v_2\\ 
& \,+(\lambda_{2,1}'(\psi_0)\rho_1+\lambda_{2,2}'(\psi_0)v_1)\psi_1+\lambda'''(\psi_0)\frac{\psi_1^3}{6}.
\end{align*} 
Define
\begin{gather*}
\rho_k(\tau)\equiv
\begin{cases}
\displaystyle -\frac{2\mathcal X_k}{k+2\pi(\psi_0)}& \text{if} \quad k<-2\pi (\psi_0),\\ 
\displaystyle \tau \mathcal X_k & \text{if} \quad k=-2\pi (\psi_0), \\
\displaystyle \int\limits_\infty^\tau e^{-\frac{(k+2\pi(\psi_0))(\varsigma-\tau)}{2}}\mathcal X_k\,d\varsigma & \text{if} \quad k>-2\pi (\psi_0).
\end{cases}
\end{gather*}
Then, system \eqref{sysk2} is solvable, and the asymptotic solution \eqref{anz2} can be constructed. In this case, for all $M\geq 1$ the following estimates hold
\begin{gather*}
\mathcal N_\rho(t)=\mathcal O\left(t^{-\frac{M+3}{2}}\right), \quad 
\mathcal N_v(t)=\mathcal O\left(t^{-\frac{M+2}{2}}\right), \quad 
\mathcal N_\psi(t)=\mathcal O\left(t^{-\frac{M+2}{2}}\right), \quad t\to\infty. 
\end{gather*} 
Substituting \eqref{subsz} with $\gamma=\chi$, $\chi\in (0,\chi_0)$ into \eqref{rvpmod}, we obtain system \eqref{zsys}, where 
\begin{align*}
{\bf F}({\bf z},t)&=t^{-\frac{1}{2}} 
\begin{pmatrix} 
0 \\
\lambda'(\hat\psi_\ast(t))z_3 \\
\nu z_2
\end{pmatrix}
+ t^{-1} 
\begin{pmatrix}  (\chi+\pi(\psi_0))z_1 \\
\lambda_{2,1}(\psi_0)z_1 +(\chi+\lambda_{2,2}(\psi_0))z_2 \\
(\chi+\nu_{2,0}'(\psi_0))z_3 
\end{pmatrix}+\mathcal O(t^{-\frac{3}{2}})\mathcal O({\bf z}), \\
{\bf N}(t)&=\mathcal O(t^{-\frac{M+1}{2}})
\end{align*}
as  $|{\bf z}|\to 0$ and $t\to\infty$. The Lyapunov function is considered in the form \eqref{Lalpha} with $\alpha=1$. In this case, 
\begin{align*}
D_1^1({\bf z},t)& = t^{-1}\left((\chi+\pi(\psi_0))z_1^2+ (|\nu|z_2^2+|\lambda'(\psi_0)|z_3^2)\left(\chi+\frac{\vartheta(\psi_0)}{2}\right)\right)+\mathcal O(t^{-\frac{3}{2}})\mathcal O(|{\bf z}|^2), \\
D_1^2({\bf z},t) &= \mathcal O(t^{-\frac{3}{2}})\mathcal O(|{\bf z}|)
\end{align*}
as $|{\bf z}|\to 0$ and $t\to\infty$. It follows that there exist $d_3>0$ and $t_3\geq t_\ast$ such that 
\begin{gather*}
D_1^1({\bf z},t)\leq - t^{-1}\mu_1 |{\bf z}|^2, \quad 
D_1^2({\bf z},t)\leq  C_3 t^{-\frac{3}{2}} |{\bf z}|
\end{gather*}
for all $|{\bf z}|\leq d_3$ and $t\geq t_3$, where $\mu_1=\min\{|\chi+\pi(\psi_0)|, |\nu||\chi+\vartheta(\psi_0)/2|, |\lambda'(\psi_0)||\chi+\vartheta(\psi_0)/2|\}/2$ and $C_3$ is a positive constant. 
Therefore, for all $\epsilon\in(0,\min\{d_1,d_3\})$ there exist 
\begin{gather*}
\delta_\epsilon=\frac{2C_3 t_\epsilon^{-\frac{1}{2}}}{\mu_1}, \quad t_\epsilon=\max\left\{t_1,t_3,\left(\frac{4C_3}{\mu_1\epsilon}\right)^{2} \frac{L_+}{L_-} \right\}
\end{gather*}
such that $dL_1/dt\leq -t^{-1}\mu_1 |{\bf z}|^2/2$ for all $\delta_\epsilon\leq |{\bf z}|\leq \epsilon$ and $t\geq t_\epsilon$. Repeating the above reasoning, we conclude that there is a solution to system \eqref{rvpmod} such that $\hat \rho_0(t)=\mathcal O(t^{-\chi})$, $\hat v_0(t)=\mathcal O(t^{-\chi})$, $\hat \psi_0(t)=\psi_0+\mathcal O(t^{-\chi})$ as $t\to\infty$. 

To prove the stability of the solution $(\hat\rho_0(t),\hat v_0(t),\hat \psi_0(t))$, consider the following change of variables in system \eqref{rvpmod}
\begin{gather*}
\hat\rho = \hat \rho_0(t)+y_1, \quad 
\hat v = \hat v_0(t)+y_2, \quad 
\hat\psi = \hat \psi_0(t)+y_3.
\end{gather*}
Then, in the new variables ${\bf y}=(y_1,y_2,y_3)^T$ the system takes the form
\begin{gather}\label{ysys0}
\frac{d{\bf y}}{dt}={\bf G}({\bf y},t), \quad
{\bf G}({\bf y},t)\equiv {\bf a}(\hat \rho_0(t)+y_1,\hat v_0(t)+y_2,\hat \psi_0(t)+y_3,t)-{\bf a}(\hat \rho_0(t),\hat v_0(t),\hat \psi_0(t),t).
\end{gather}
Note that ${\bf G}(0,t)\equiv 0$ and
\begin{gather}\label{Gest}
\begin{split}
{\bf G}({\bf y},t)=&\, t^{-\frac{\alpha}{2}} 
\begin{pmatrix} 
0 \\
\lambda(\hat\psi_0(t)+y_3)-\lambda(\hat\psi_0(t)) \\
\nu y_2
\end{pmatrix} \\
&
+ t^{-\alpha} 
\begin{pmatrix}  \pi(\psi_0+y_3)y_1 \\
\lambda_{2,1}(\psi_0+y_3)y_1 + \lambda_{2,2}(\psi_0+y_3) y_2 \\
 \nu_{2,0}(\psi_0+y_3) -\nu_{2,0}(\psi_0)+y_1^2\nu_{2,1}+y_2^2\nu_{2,2}
\end{pmatrix}+\mathcal O(t^{-\alpha-\varsigma})\mathcal O({\bf y})
\end{split}
\end{gather}
as  $|{\bf y}|\to 0$ and $t\to\infty$, where 
\begin{gather}\label{varsigma}
\varsigma=\begin{cases}
\frac{\alpha}{2}, & \text{if} \quad \alpha\neq 1,\\
\chi\in (0,\chi_0), & \text{if} \quad \alpha = 1.
\end{cases}
\end{gather}
Consider
\begin{gather}
\begin{split}\label{Lfunct}
L({\bf y},t)\equiv & \, \frac{y_1^2}{2}+\frac{|\nu|y_2^2}{2}-({\hbox{\rm sgn}}\,\nu) \int\limits_0^{y_3}\left(\lambda(\hat\psi_0(t)+\zeta)-\lambda(\hat\psi_0(t))\right)d\zeta \\
&-({\hbox{\rm sgn}}\,\nu) t^{-\frac{\alpha}{2}}\left(\lambda_{2,1}(\psi_0)y_1y_3+  (\lambda_{2,2}(\psi_0)-\nu_{2,0}'(\psi_0))\frac{y_2y_3}{2} \right)
\end{split}
\end{gather}
as a Lyapunov function candidate for system \eqref{ysys0}. It can easily be checked that $L({\bf y},t)=(y_1^2+|\nu|y_2^2+|\lambda'(\psi_0)|y_3^2)/2+\mathcal O(|{\bf y}|^3)+\mathcal O(t^{-\varsigma})\mathcal O(|{\bf y}|^2)$ as $|{\bf y}|\to 0$ and $t\to\infty$. Hence, there exist $\Delta_1>0$ and $\tau_1\geq t_\ast$ such that $L_- |{\bf y}|^2\leq L({\bf y},t)\leq L_+ |{\bf y}|^2$ for all $|{\bf y}|\leq \Delta_1$ and $t\geq \tau_1$. The derivative of $L({\bf y},t)$ along the trajectories of system \eqref{ysys0} satisfies 
\begin{align*}
\frac{dL}{dt}& =\partial_t L({\bf y},t)+({\bf G}({\bf y},t))^T \nabla_{\bf y} L({\bf y},t) \\
& = t^{-\alpha}\left(\pi(\psi_0)y_1^2+ (|\nu|y_2^2+|\lambda'(\psi_0)|y_3^2)\frac{\vartheta(\psi_0)}{2}+\mathcal O(|{\bf y}|^3)+\mathcal O(t^{-\varsigma})\mathcal O(|{\bf y}|^2)\right)
\end{align*} 
as $|{\bf y}|\to 0$ and $t\to\infty$. Hence, there exist $\Delta_2\in (0,\Delta_1)$ and $\tau_2\geq \tau_1$ such that 
\begin{gather*}
\frac{dL}{dt}\leq -t^{-\alpha}\mu_\alpha |{\bf y}|^2\leq - t^{-\alpha}c L
\end{gather*}
for all $|{\bf y}|\leq \Delta_2$ and $t\geq \tau_2$ with $c=\mu_\alpha/L_+>0$. Hence, the solution ${\bf y}(t)\equiv 0$ is stable. Moreover, integrating the last inequality as $\tau\geq \tau_2$ with $|{\bf y}(\tau_2)|\leq \Delta_2$, we get
\begin{align*}
|{\bf y}(t)|&\leq  B\exp\left(-\frac{c}{2(1-\alpha)}\left(t^{1-\alpha}-\tau_2^{1-\alpha}\right)\right), & \alpha&<1, \\
|{\bf y}(t)|&\leq B\left(\frac{t}{\tau_2}\right)^{-\frac{c}{2}}, & \alpha&=1,
\end{align*}
where $B=\Delta_2\sqrt{L_+/L_-}>0$. It follows that both the solution ${\bf y}(t)\equiv 0$ of system \eqref{ysys0} and the solution $(\hat\rho_0(t),\hat v_0(t),\hat\psi_0(t))$ of system \eqref{rvpmod} are asymptotically stable. 

{\bf 2}. Now, let $\vartheta(\psi_0)>0$. Substituting 
\begin{gather*}
\hat\rho = \hat \rho_\ast(t)+y_1, \quad 
\hat v = \hat v_\ast(t)+y_2, \quad 
\hat\psi = \hat \psi_\ast(t)+y_3
\end{gather*}
with some sufficiently large $M,M_1,M_2\in\mathbb Z_+$ into \eqref{rvpmod}, we obtain
\begin{gather}\label{ysys1}
\frac{d{\bf y}}{dt}={\bf G}_\ast({\bf y},t)+{\bf N}_\ast(t),
\end{gather}
where
\begin{align}
\label{Gast}
{\bf G}_\ast({\bf y},t) \equiv&\, {\bf a}(\hat \rho_\ast(t)+y_1,\hat v_\ast(t)+y_2,\hat \psi_\ast(t)+y_3,t)-{\bf a}(\hat \rho_\ast(t),\hat v_\ast(t),\hat \psi_\ast(t),t) \\ 
\nonumber
=&\, t^{-\frac{\alpha}{2}} 
\begin{pmatrix} 
0 \\
\lambda(\hat\psi_\ast(t)+y_3)-\lambda(\hat\psi_\ast(t)) \\
\nu y_2
\end{pmatrix} \\
\nonumber
&
+ t^{-\alpha} 
\begin{pmatrix}  \pi(\psi_0+y_3)y_1 \\
\lambda_{2,1}(\psi_0+y_3)y_1 + \lambda_{2,2}(\psi_0+y_3) y_2 \\
 \nu_{2,0}(\psi_0+y_3) -\nu_{2,0}(\psi_0)+y_1^2\nu_{2,1}+y_2^2\nu_{2,2}
\end{pmatrix}+\mathcal O(t^{-\frac{3\alpha}{2}})\mathcal O({\bf y}),\\
\label{Nast}
{\bf N}_\ast(t)  \equiv &\, - \begin{pmatrix} 
 \mathcal N_\rho(t) \\ \mathcal N_{v}(t) \\ \mathcal N_\psi(t)
\end{pmatrix} = \mathcal O(t^{-\frac{3\alpha}{2}})
\end{align}
as  $|{\bf y}|\to 0$ and $t\to\infty$.
Let us first show that $y_1(t)$ remains near zero as long as $\sqrt{y_2^2(t)+y_3^2(t)}\leq d$ with some $d>0$.
Consider $\ell(y_1)=y_1^2/2$ as a Lyapunov function. It easily follows that $d\ell/dt=t^{-\alpha} \pi(\psi_0+y_3)y_1^2+\mathcal O(y_1)\mathcal O(t^{-3\alpha/2})$ as $y_1\to 0$ and $t\to\infty$. Since $\pi(\psi_0)<0$, there exists $\tilde d>0$ such that $\pi(\psi_0+y_3)\leq -|\pi(\psi_0)|/2$ for all $|y_3|\leq \tilde d$. Hence, there exist $t_1\geq t_\ast$, $d_1>0$, and $B_1>0$ such that $d\ell/dt\leq t^{-\alpha}(-|\pi(\psi_0)|y_1^2/2+B_1 t^{-\alpha} |y_1|)$ for $|y_1|\leq d_1$, $|\tilde{\bf y}|:=\sqrt{y_2^2+y_3^2}\leq \tilde d$, and $t\geq t_1$. Moreover, for all $\epsilon\in (0,d_1]$, there exist
\begin{gather*}
\delta_\epsilon=\frac{4 B_1\tau_\epsilon^{-\frac{\alpha}{2}}}{|\pi(\psi_0)|}, \quad \tau_\epsilon=\max\left\{t_1, \left(\frac{8B_1}{|\pi(\psi_0) |\epsilon}\right)^{\frac{2}{\alpha}}\right\}
\end{gather*}
such that $d\ell/dt\leq t^{-\alpha}(-|\pi(\psi_0)|/2+B_1\tau_\epsilon^{-\alpha/2}\delta_\epsilon^{-1})y_1^2\leq 0$ for all $\delta_\epsilon\leq |y_1|\leq \epsilon $, $|\tilde{\bf y}|\leq \tilde d$ and $t\geq \tau_\epsilon$. 
Next, we show that instability occurs with respect to the variables $y_2$ and $y_3$.  Consider 
\begin{align*}
\tilde \ell(y_2,y_3,t)\equiv & \, \frac{|\nu|y_2^2}{2}-({\hbox{\rm sgn}}\,\nu) \int\limits_0^{y_3}\left(\lambda(\hat\psi_\ast(t)+\zeta)-\lambda(\hat\psi_\ast(t))\right)d\zeta  -({\hbox{\rm sgn}}\,\nu) t^{-\frac{\alpha}{2}}  (\lambda_{2,2}(\psi_0)-\nu_{2,0}'(\psi_0))\frac{y_2y_3}{2}  
\end{align*}
as a Lyapunov function candidate for system \eqref{ysys1}. Note that 
$\tilde \ell(y_2,y_3,t)=(|\nu|y_2^2+|\lambda'(\psi_0)|y_3^2)/2+\mathcal O(|\tilde {\bf y}|^3)+\mathcal O(t^{-\alpha/2})\mathcal O(|\tilde{\bf y}|^2)$ as $|\tilde{\bf y}|\to 0$ and $t\to\infty$. Hence, there exist $\tilde d_1\in (0,\tilde d)$ and $\tilde\tau_1\geq t_\ast$ such that $\ell_- |\tilde {\bf y}|^2\leq \tilde \ell(y_2,y_3,t)\leq \ell_+ |\tilde{\bf y}|^2$ for all $|\tilde{\bf y}|\leq \tilde d_1$ and $t\geq \tilde\tau_1$, where $\ell_-=\min\{|\nu|,|\lambda'(\psi_0)|\}/4$ and $\ell_+=\max\{|\nu|,|\lambda'(\psi_0)|\}$. The derivative of $\tilde \ell(y_2,y_3,t)$ along the trajectories of system \eqref{ysys1} satisfies 
\begin{align*}
\frac{d\tilde\ell}{dt} 
  = t^{-\alpha}\left( (|\nu|y_2^2+|\lambda'(\psi_0)|y_3^2)\frac{\vartheta(\psi_0)}{2}+\xi(y_1,y_2,y_3)+\mathcal O(|\tilde {\bf y}|^3)+\mathcal O(t^{-\frac{\alpha}{2}})\mathcal O(|\tilde{\bf y}|)\right)
\end{align*} 
as $ |\tilde{\bf y}|\to 0$ and $t\to\infty$ uniformly for all $|y_1|\leq \epsilon$, where
$\xi(y_1,y_2,y_3)\equiv |\nu|\lambda_{2,1}(\psi_0+y_3)y_1y_2-({\hbox{\rm sgn}}\,\nu) (\lambda(\psi_0+y_3)-\lambda(\psi_0))\nu_{2,1}y_1^2 $. It follows that there exist $\tilde d_2\in (0,\tilde d_1]$, $\tilde \tau_2\geq t_\ast$, and $\tilde B_1>0$ such that
$d\tilde \ell/dt\geq t^{-\alpha} (\vartheta(\psi_0)\ell_- |\tilde{\bf y}|^2-\tilde B_1(\epsilon+t^{-\frac{\alpha}{2}}) |\tilde {\bf y}|)$ for all $|y_1|\leq \epsilon$, $|\tilde {\bf y}|\leq \tilde d_2$ and $t\geq \tilde \tau_2$. Hence, for all $\tilde \delta\in(0,\tilde d_2/2)$ there exist 
\begin{gather*}
\epsilon=\min\left\{d_1,\frac{\tilde \delta \vartheta(\psi_0)\ell_-}{4\tilde B_1}\right\}, \quad 
\tau_\ast=\max\left\{\tau_\epsilon,\tilde\tau_1,\tilde \tau_2,\left(\frac{4\tilde B_1}{\tilde \delta \vartheta(\psi_0)\ell_-}\right)^{\frac{2}{\alpha}}\right\}
\end{gather*}
such that $d\tilde \ell/dt\geq t^{-\alpha} \vartheta(\psi_0)\ell_- |\tilde{\bf y}|^2/2\geq t^{-\alpha} \tilde c  \tilde \ell$ for all $|y_1|\leq \epsilon$, $\tilde \delta \leq |\tilde {\bf y}|\leq \tilde d_2$ and $t\geq \tau_\ast$ with $\tilde c=\vartheta(\psi_0) \ell_1/(2 \ell_+)>0$. Integrating this inequality as $t\geq t_s\geq \tau_\ast$ and taking $|\tilde {\bf y}(t_s)|=\tilde \delta$, we obtain
\begin{align*}
|{\bf y}(t)|&\geq  \tilde \delta \sqrt{\frac{\ell_-}{\ell_+}}\exp\left(-\frac{\tilde c}{2(1-\alpha)}\left(t^{1-\alpha}-t_s^{1-\alpha}\right)\right), & \alpha&<1, \\
|{\bf y}(t)|&\geq \tilde \delta \sqrt{\frac{\ell_-}{\ell_+}}\left(\frac{t}{t_s}\right)^{-\frac{\tilde c}{2}}, & \alpha&=1.
\end{align*}
We see that there exists $t_e>t_s$ such that $|\tilde {\bf y}(t_e)|\geq \tilde d_2/2$. Returning to the variables $(\hat\rho,\hat v,\hat\psi)$ leads to the desired result.
\end{proof}

\section{Analysis of the full system}
\label{sec6}

\begin{proof}[Proof of Theorem~\ref{Th2}]
The proof is divided into two parts. First, we establish the stability of the solution, and then we demonstrate its instability.

{\bf 1}. Let $\pi(\psi_0)<0$, $\lambda'(\psi_0)\nu<0$, $\vartheta(\psi_0)<0$, and let $(\hat\rho_0(t),\hat v_0(t),\hat\psi_0(t))$ be the solution of system \eqref{rvpmod}, described in Lemma~\ref{Lem1}. Substituting 
\begin{gather}\label{subs22}
\rho=\hat\rho_0(t)+y_1, \quad 
v=\hat v_0(t)+y_2, \quad 
\psi=\hat\psi_0(t)+y_3
\end{gather}
into \eqref{rvpp}, we obtain the following system for the new variables $({\bf y},\varphi)$:
\begin{gather}\label{ysys2}
\begin{split}
& \frac{d{\bf y}}{dt}={\bf G}({\bf y},t)+\tilde {\bf G}({\bf y},\varphi,t),\\ 
& \frac{d\varphi}{dt}=\omega_2(A)+\tilde {\mathcal G}(\hat\rho_0(t)+y_1,\hat v_0(t)+y_2,\hat\psi_0(t)+y_3,\varphi,t),
\end{split}
\end{gather}
where ${\bf y}=(y_1,y_2,y_3)^T$,  ${\bf G}({\bf y},t)$ is defined in \eqref{ysys0}, and 
\begin{gather*}
\tilde {\bf G}({\bf y},\varphi,t) \equiv \tilde {\bf a}(\hat\rho_0(t)+y_1,\hat v_0(t)+y_2,\hat\psi_0(t)+y_3,\varphi,t), \quad 
\tilde {\bf a}(\rho,v,\psi,\varphi,t)\equiv 
\begin{pmatrix} 
\tilde \Pi(\rho,v,\psi,\varphi,t) \\ 
\tilde \Lambda(\rho,v,\psi,\varphi,t)\\
\tilde \Omega(\rho,v,\psi,\varphi,t)
\end{pmatrix}.
\end{gather*}
From \eqref{tildeLLOP} it follows that $|\tilde {\bf G}({\bf y},\varphi,t)|=\mathcal O(t^{-3\alpha/2})$ as $t\to\infty$ uniformly for all $|{\bf y}|\leq \Delta$ and $\varphi\in\mathbb R$ with some $\Delta>0$, while ${\bf G}({\bf y},t)$ satisfies \eqref{Gest} and \eqref{varsigma}.
We choose the Lyapunov function in the form \eqref{Lfunct}. It can easily be checked that 
\begin{align*}
\frac{dL}{dt}& =\partial_t L({\bf y},t)+\left(({\bf G}({\bf y},t))^T+(\tilde{\bf G}({\bf y},\varphi,t))^T\right) \nabla_{\bf y} L({\bf y},t) \\
& = t^{-\alpha}\left(\pi(\psi_0)y_1^2+ (|\nu|y_2^2+|\lambda'(\psi_0)|y_3^2)\frac{\vartheta(\psi_0)}{2}+\mathcal O(|{\bf y}|^3)+\mathcal O(t^{-\varsigma})\mathcal O(|{\bf y}|^2)+\mathcal O(t^{-\frac{\alpha}{2}})\mathcal O(|{\bf y}|)\right)
\end{align*} 
as $|{\bf y}|\to 0$ and $t\to\infty$. Hence, there exist $\Delta_1\in (0,\Delta]$, $t_1\geq t_\ast$, and $C_1>0$ such that 
\begin{gather*}
L_- |{\bf y}|^2\leq L({\bf y},t)\leq L_+ |{\bf y}|^2, \quad 
\frac{dL}{dt}\leq  t^{-\alpha}\left(-\mu_\alpha |{\bf y}|^2+C_1 t^{-\frac{\alpha}{2}} |{\bf y}|\right)
\end{gather*}
for all $|{\bf y}|\leq \Delta_1$, $\varphi\in\mathbb R$ and $t\geq t_1$, where the parameters $\mu_\alpha>0$ and $L_\pm>0$  are defined in \eqref{mualpha} and \eqref{LLpm}. 
Therefore, for all $\epsilon\in(0,\Delta_1)$ there exist 
\begin{gather*}
\delta_\epsilon=\frac{2C_1 t_\epsilon^{-\frac{\alpha}{2}}}{\mu_\alpha}, \quad 
t_\epsilon=\max\left\{t_1,\left(\frac{4C_1}{\mu_\alpha\epsilon}\sqrt{\frac{L_+}{L_-}}\right)^{\frac{2}{\alpha}}\right\}
\end{gather*}
such that $dL/dt\leq -t^{-\alpha} \mu_\alpha |{\bf y}|^2/2$ for all $\delta_\epsilon\leq |{\bf y}|\leq \epsilon$, $\varphi\in\mathbb R$, and $t\geq t_\epsilon$. Combining this with the inequalities \eqref{supinf}, we see that the solution $({\bf y}(t),\varphi(t))$ of system \eqref{ysys2} with initial conditions $|{\bf y}(t_s)|\leq \delta_\epsilon$, $\varphi(t_s)\in\mathbb R$, and $t_s\geq t_\epsilon$ cannot leave the domain $\{({\bf y},\varphi)\in\mathbb R^4: |{\bf y}|\leq \epsilon\}$ for all $t\geq t_s$. Thus, taking into account \eqref{ch1}, \eqref{ch2}, and \eqref{subs22}, we obtain the desired result.

{\bf 2}. Consider the asymptotic solution $(\hat \rho_\ast(t),\hat v_\ast(t),\hat \psi_\ast(t))$ constructed in the proof of Lemma~\ref{Lem1}. Substituting
\begin{gather*}
\rho = \hat \rho_\ast(t)+y_1, \quad 
v = \hat v_\ast(t)+y_2, \quad 
\psi = \hat \psi_\ast(t)+y_3
\end{gather*}
into \eqref{rvpp}, we obtain 
\begin{gather}\label{ysys3}
\begin{split}
& \frac{d{\bf y}}{dt}={\bf G}_\ast({\bf y},t)+\tilde {\bf G}_\ast({\bf y},\varphi,t),\\ 
& \frac{d\varphi}{dt}=\omega_2(A)+\tilde {\mathcal G}(\hat\rho_\ast(t)+y_1,\hat v_\ast(t)+y_2,\hat\psi_\ast(t)+y_3,\varphi,t),
\end{split}
\end{gather}
where 
$\tilde {\bf G}_\ast({\bf y},\varphi,t) \equiv \tilde {\bf a}(\hat\rho_\ast(t)+y_1,\hat v_\ast(t)+y_2,\hat\psi_\ast(t)+y_3,\varphi,t)-{\bf N}_\ast(t)$, and the functions ${\bf G}_\ast({\bf y},t)$ and ${\bf N}_\ast(t)$ are defined in \eqref{Gast} and \eqref{Nast}. 

Let $\pi(\psi_0)>0$. Consider $\ell(y_1)=y_1^2/2$ as a Lyapunov function candidate for system \eqref{ysys3}. 
Note that $d\ell/dt=t^{-\alpha} \pi(\psi_0+y_3)y_1^2+\mathcal O(y_1)\mathcal O(t^{-3\alpha/2})$ as $y_1\to 0$ and $t\to\infty$ uniformly for all $\varphi\in\mathbb R$. 
There exists $\tilde d>0$ such that $\pi(\psi_0+y_3)\geq  |\pi(\psi_0)|/2$ for all $|y_3|\leq \tilde d$. 
It follows that there are $t_1\geq t_\ast$, $d_1>0$, and $B_1>0$ such that 
$d\ell/dt\geq t^{-\alpha}(|\pi(\psi_0)|y_1^2/2-B_1 t^{-\alpha} |y_1|)$ for all $|y_1|\leq d_1$, $|\tilde{\bf y}|:=\sqrt{y_2^2+y_3^2}\leq \tilde d$, $\varphi\in\mathbb R$, and $t\geq t_1$. 
Hence, for all $\delta\in (0,d_1/2)$ there exists $\tau_\ast=\max\{t_1,(4B_1/\pi(\psi_0)\delta)^{2/\alpha}\}$ such that 
$d\ell/dt\leq t^{-\alpha}(\pi(\psi_0)/2-B_1\tau_\ast^{-\alpha/2}\delta)y_1^2\geq t^{-\alpha} \pi(\psi_0) y_1^2/4=t^{-\alpha} c \ell$ for all $\delta\leq |y_1|\leq d_1$, $|\tilde{\bf y}|\leq \tilde d$, $\varphi\in\mathbb R$, and $t\geq \tau_\ast$ with $c=\pi(\psi_0)/2>0$. 
Integrating this inequality for $t\geq t_s\geq \tau_\ast$ with $|y_1(t_s)|=\delta$, $|\tilde{\bf y}(t_s)|\leq \tilde d$, and $\varphi(t_s)\in\mathbb R$, we see that there exists $t_e>t_s$ such that $|y_1(t_e)|\geq d_1/2$.

Now, consider the case $\lambda'(\psi_0)\nu>0$. For definiteness, let $\nu>0$. We take $\tilde \ell(y_2,y_3)=y_2 y_3$ as a Chetaev function for system \eqref{ysys3}. Note that 
$\tilde \ell(y_2,y_3)\geq 0$ for all $(y_2,y_3)\in \mathcal D:=\{y_2\geq 0, y_3\geq 0\}$ and $\tilde \ell (y_2,y_3)\leq |\tilde {\bf y}|^2/2$. The derivative of $\tilde \ell(y_2,y_3)$ satisfies 
$d\tilde\ell/dt=t^{-\alpha/2} (\nu y_2^2+\lambda'(\psi_0)y_3^2 +\mathcal O(|\tilde{\bf y}|^3)) + \mathcal O(t^{-\alpha})\mathcal O(|\tilde{\bf y}| )$ as $|\tilde {\bf y}|\to 0$ and $t\to\infty$ uniformly for all $|y_1|\leq d_1$ and $\varphi\in\mathbb R$. Hence, there exists $\tilde d_1>0$, $\tilde t_1\geq t_\ast$, and $\tilde B_1>0$ such that
$d\tilde \ell/dt\geq t^{-\alpha/2}( \tilde b |\tilde {\bf y}|^2 - \tilde B_1 t^{-\alpha/2} |\tilde {\bf y}| )$ for all $|y_1|\leq d_1$, $|\tilde {\bf y}|\leq \tilde d_1$, $\varphi\in\mathbb R$, and $t\geq \tilde t_1$ with $\tilde b=\min\{\nu,\lambda'(\psi_0)\}/2$. It follows that for all $\tilde\delta\in (0,\tilde d_1/2)$ there exists $\tilde \tau_\ast=\max\{\tilde t_1,(2\tilde B_1/\tilde b \tilde \delta)^{2/\alpha}\}$ such that 
$d\tilde \ell/dt\leq t^{-\alpha/2}(\tilde b-\tilde B_1\tilde \tau_\ast^{-\alpha/2}\delta)|\tilde {\bf y}|^2\geq t^{-\alpha/2} \tilde b |\tilde {\bf y}|^2/2\geq t^{-\alpha} \tilde b \tilde \ell$ for all $|y_1|\leq d_1$, $(y_2,y_3)\in\mathcal D$, $\tilde \delta \leq |\tilde{\bf y}|\leq \tilde d_1$, $\varphi\in\mathbb R$, and $t\geq \tilde \tau_\ast$. 
Integrating this inequality for $t\geq t_s\geq \tau_\ast$ with $|y_1(t_s)|\leq d_1$, $y_2(t_s)= \tilde \delta \cos u$, $y_3(t_s)= \tilde \delta \sin u$, $u\in (0,\pi/2)$, and $\varphi(t_s)\in\mathbb R$, we see that there exists $t_e>t_s$ such that $|\tilde {\bf y}(t_e)|\geq \tilde d_1/2$.

Finally, if $\pi(\psi_0)<0$, $\lambda'(\psi_0)\nu<0$, and $\vartheta(\psi_0)>0$, the proof of instability follows by repeating the arguments from the second part of Lemma~\ref{Lem1}.

Thus, returning to the original variables leads to the result of the theorem.
\end{proof}

\begin{proof}[Proof of Theorem~\ref{Th3}]
From the second equation in \eqref{rvpp} it follows that there exist $C_1>0$ and $t_1\geq t_\ast$ such that $|v(t)|\geq C_1 t^{1-\alpha/2}$ for $t\geq t_1$. Combining this with the third equation in \eqref{rvpp}, we see that there exist $C_2>0$ and $t_2\geq t_1$ such that $|\psi(t)|\geq C_2 t^{2-\alpha}$ for $t\geq t_2$. Hence, $|\psi(t)|\to \infty$ as $t\to\infty$, and $|v(t)|$ increases until it reaches the boundary of the domain $\mathfrak D_{\epsilon,t_\ast}$.
\end{proof}

\section{Examples}\label{sex}

\subsection{Example 1}
Consider again system \eqref{PS} with $\omega_1(E)\equiv 1/2+E$, $\omega_2(E)\equiv 1-E$, and perturbations \eqref{FGmod}. It can easily be checked that the application of Theorem~\ref{Th1} with $\kappa=\varkappa=1$ and $A=1/2$ yields system \eqref{rvpp} with $\nu=1$, $\lambda_{2,1}(\psi)\equiv 0$, $\lambda_{2,2}(\psi)\equiv 2\lambda(\psi)$,
\begin{gather*}
\pi(\psi)    \equiv \frac{1}{4}\left(a_{1,1}+a_{1,2}\cos\psi\right), \quad  
\lambda(\psi)   \equiv \frac{1}{4}
\left(a_{2,1}+a_{2,2}\cos\psi\right), \quad
 \nu_{2,0}(\psi)   \equiv - \frac{1}{2}\left(b_{1,2}+b_{2,1}\sin\psi\right).
\end{gather*}
If $|a_{2,1}/a_{2,2}|<1$, then assumption \eqref{asl} holds with
\begin{gather*}
\psi_0^\pm=\pm  \arccos\left(-\frac{a_{2,1}}{a_{2,2}}\right) +2\pi k, \quad k\in\mathbb Z.
\end{gather*}
Note that $\nu \lambda'(\psi_0^\pm)=-(a_{2,2}/4)\sin\psi_0^\pm$. Let $a_{2,2}>0$. Then $\nu \lambda'(\psi_0^-)>0$ and $\nu \lambda'(\psi_0^+)<0$. It follows from Lemma~\ref{Lem0} and Theorem~\ref{Th2} that the equilibrium $(0,0,\psi_0^-)$ of the truncated system \eqref{rvpls} and the corresponding regime in system \eqref{PS} are unstable. It can easily be checked that $\pi(\psi_0^+)=(a_{1,1}a_{2,2}-a_{1,2}a_{2,1})/(4 a_{2,2})$ and $\vartheta(\psi_0^+)=b_{2,1} a_{2,1}/(2 a_{2,2})$. If $a_{1,1}<a_{2,1}a_{2,1}/a_{2,2}$, the equilibrium $(0,0,\psi_0^+)$ is stable in the truncated system. If, in addition, $b_{2,1} a_{2,1}<0$, then it follows from Theorem~\ref{Th2} that the solution with $E_1(t)\approx 0$, $E_2(t)\approx A$, and $\varphi_1(t)-\varphi_2(t)\approx \psi_0^+$ is stable in system \eqref{PS} (see~Fig.~\ref{FigEx1PL}).

\begin{figure}
\centering
{
  \includegraphics[width=0.4\linewidth]{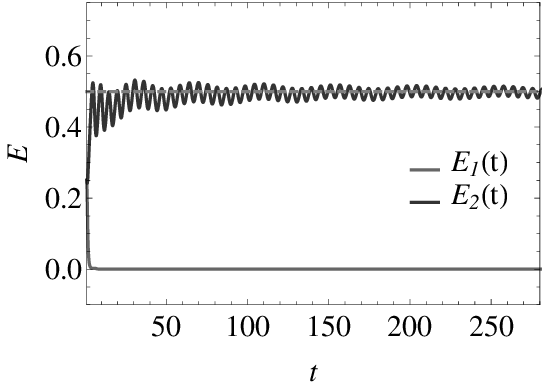}
}
\hspace{1ex}
{
   	\includegraphics[width=0.4\linewidth]{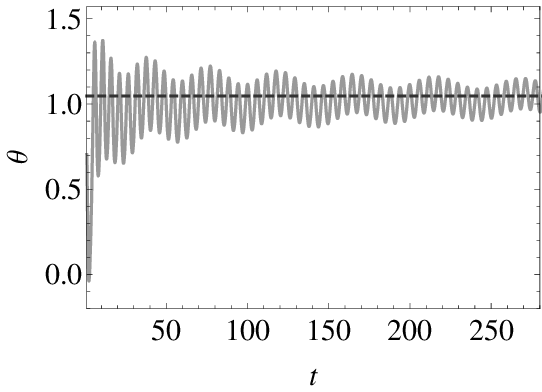}
}
\caption{\small The evolution of $E_1(t)$, $E_2(t)$, and $\theta(t)\equiv \varphi_1(t)-\varphi_2(t)$ for the solutions to system \eqref{PS} with $\omega_1(E)\equiv 1/2+E$, $\omega_2(E)\equiv 1-E$, and perturbations \eqref{FGmod}, where $a_{1,1}=b_{1,2}=-1$, $a_{1,2}=-2$, $b_{1,1}=a_{2,2}=b_{2,1}=b_{2,2}=1$, $a_{2,1}=-0.5$, and the initial conditions are $E_1(1)=E_2(1)=0.25$, $\varphi_1(1)=0.7$, $\varphi_2(1)=0$. The dashed curves correspond to $E(t)\equiv 0.5$ and $\theta(t)\equiv \pi/3$.} \label{FigEx1PL}
\end{figure}

If $|a_{2,1}/a_{2,2}|>1$, then assumption \eqref{asl2} holds, and it follows from Theorem~\ref{Th3} that the asymptotic regime with $E_1(t)\approx 0$, $E_2(t)\approx A$, and $\varphi_1(t)-\varphi_2(t)\approx {\hbox{\rm const}}$ does not arise (see~Fig.~\ref{FigEx1PD}).

\begin{figure}
\centering
{
  \includegraphics[width=0.4\linewidth]{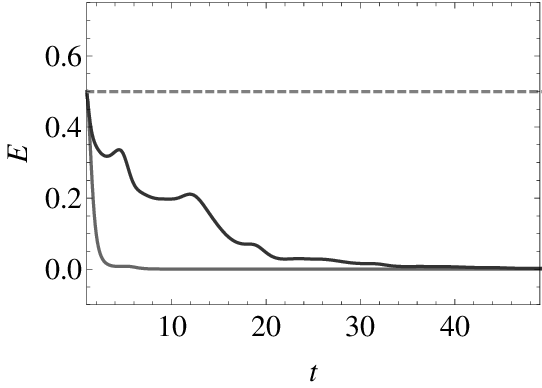}
}
\hspace{1ex}
{
   	\includegraphics[width=0.4\linewidth]{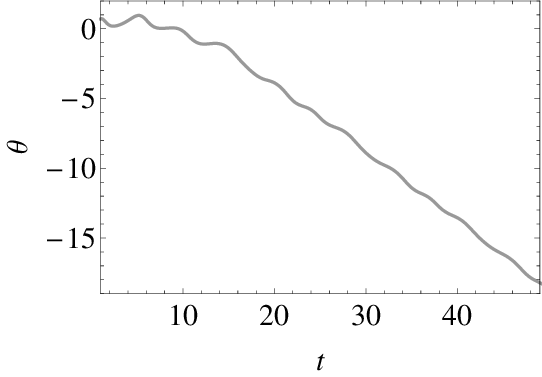}
}
\caption{\small The evolution of $E_1(t)$, $E_2(t)$, and $\theta(t)\equiv \varphi_1(t)-\varphi_2(t)$ for the solutions to system \eqref{PS} with $\omega_1(E)\equiv 1/2+E$, $\omega_2(E)\equiv 1-E$, and perturbations \eqref{FGmod}, where $a_{1,1}=b_{1,2}=-1$, $a_{1,2}=-2$, $b_{1,1}=a_{2,2}=b_{2,1}=b_{2,2}=1$, $a_{2,1}=-1.2$, and the initial conditions are $E_1(1)=E_2(1)=0.5$, $\varphi_1(1)=0.7$, $\varphi_2(1)=0$. The dashed curve corresponds to $E(t)\equiv 0.5$.} \label{FigEx1PD}
\end{figure}

\subsection{Example 2}
Consider system \eqref{Ex0} with $U_i(x)={w_i^2 x^2}/{2}+{u_i x^4}/{4}$, $u_1>0$, $u_2<0$, $w_i>0$, and $\alpha=1/2$. The corresponding limiting system $dx_i/dt=w_i y_i$, $dy_i/dt=-U'(x_i)/w_i$, $i\in\{1,2\}$ describes two different Duffing oscillators with stable equilibria at $(x_1,y_1)=(0,0)$ and $(x_2,y_2)=(0,0)$. The level lines $\{(x_i,y_i)\in\mathbb R^2: H_i(x_i,y_i)=E_i\}$ for $0<E_i<w_i^4/  |4u_i|$ correspond to periodic solutions with the frequencies (see~\cite[\S 4.2.1]{CL11})
\begin{align*}
\omega_1(E_1) & \equiv   \pi \sqrt{w_1^2+u_1 \rho_1^2} \left\{2K\left(\frac{u_1\rho_1^2}{2(w_1^2+u_1\rho_1^2)}\right)\right\}^{-1}, \quad 
\rho_1=\sqrt{\frac{\sqrt{w_1^2+4E_1 u_1}-w_1}{u_1}},\\
\omega_2(E_2) & \equiv  \pi \sqrt{w_2^2-\frac{|u_2| \rho_2^2}{2}} \left\{2K\left(\frac{|u_2|\rho_2^2}{2w_2^2-|u_2|\rho_2^2}\right)\right\}^{-1}, \quad 
\rho_2=\sqrt{\frac{w_2-\sqrt{w_2^2-4E_2 |u_2|}}{|u_2|}},
\end{align*}
where $K(k)$ is the complete elliptic integral of the first kind and $H_i(x,y)\equiv U_i(x)+w_i^2 y^2/2$. Note that $\omega_1(E_1)=w_1+3 E_1 u_1/(4w_1^3)+\mathcal O(E_1^2)$ as $E_1\to 0$.
Let $w_1=0.98$, $u_1=1/4$, $w_2=1$, and $u_2=-\varepsilon$ with $\varepsilon=0.1$. Then, condition \eqref{rc} holds with $\kappa=\varkappa=1$, $A\approx 0.257$, $\eta_1=3/ (16 w_1^3)>0$, and $\eta_2<0$ (see Fig.~\ref{FigRC}).
\begin{figure}
\centering
{
   \includegraphics[width=0.4\linewidth]{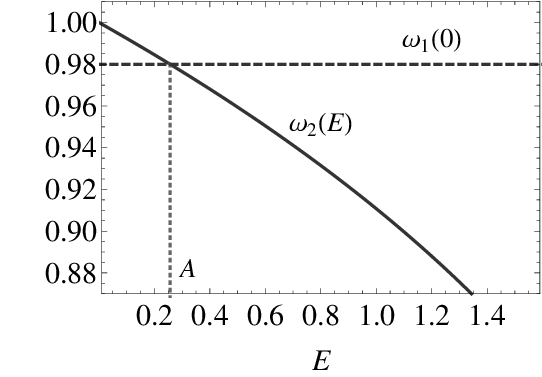}
}
\caption{\small Finding $A$, when $u_1=1/4$, $u_2=-0.1$, $w_1=0.98$, $w_2=1$, $\kappa=1$, and $\varkappa=2$.} \label{FigRC}
\end{figure}

1. Consider the perturbations in the form
\begin{gather}\label{G1G2}
\mathcal G_1(x_1,z_1,x_2,z_2)\equiv a_1 x_1 x_2^2 + b_1 z_1, \quad 
\mathcal G_2(x_1,z_1,x_2,z_2)\equiv \frac{x_1^2 (a_2 x_2 + b_2 z_2)}{w_1^2 x_1^2+z_1^2}
\end{gather}
with constant parameters $a_1,a_2,b_1$, and $b_2$. 
In this case, system \eqref{Ex0} describes two weakly coupled Duffing oscillators with time-decaying nonlinear parametric disturbances and phase-dependent coupling. Moreover, in the action-angle variables this system takes the form \eqref{PS}, where $f_1, g_1$, $f_2$, and $g_2$ are defined by \eqref{fgl}. It is not hard to check that the change of variables described in Theorem~\ref{Th1} transforms the system into \eqref{rvpp} with $\nu=-\eta_2>0$, $\lambda_{2,1}(\psi)\equiv 0 $,
\begin{align*}
\pi(\psi)& = \frac{1}{4w_1 }\left(2 w_1  b_1- A a_1 \sin 2\psi\right)+\mathcal O(\varepsilon), \\
\lambda(\psi)& =  \frac{A b_2}{4w_1^2 }\left(2 - c_2 \cos (2\psi+\theta_2)\right)+\mathcal O(\varepsilon),\\
\lambda_{2,2}(\psi)& =   \frac{1}{4w_1^2 b_2 }\left(2   - c_2 \cos(2\psi+\theta_2) \right)+\mathcal O(\varepsilon), \\
\nu_{2,0}(\psi)& = \frac{1}{8w_1^2}\left(-4 A a_1 w_1 + 2 a_2 - 2 A a_1 w_1 \cos 2 \psi + b_2 c_2 \sin(2\psi+\theta_2)\right)+\mathcal O(\varepsilon)
\end{align*}
as $\varepsilon\to0$, where $c_2=\sqrt{1+(a_2/b_2)^2}$ and $\cos \theta_2= 1/c_2$.

If $|c_2|>2$, then assumption \eqref{asl} holds with
\begin{gather*}
\psi_0^\pm=-\frac{\theta_2}{2} \pm \frac{1}{2} \arccos\left(\frac{2}{c_2}\right) +\pi k+\mathcal O(\varepsilon), \quad  \varepsilon \to 0, \quad k\in\mathbb Z.
\end{gather*}
If $\nu \lambda'(\psi_0)>0$, then it follows from Lemma~\ref{Lem0} and Theorem~\ref{Th2} that the equilibrium $(0,0,\psi_0)$ of the truncated system \eqref{rvpls} and the corresponding regime in system \eqref{Ex0} are unstable. Let $b_2<0$. Then $\nu \lambda'(\psi_0^+)<0$.
It is easily checked that $\pi(\psi_0^+)<0$ if and only if
\begin{gather*}
 \frac{A a_1 }{c_2^2}\left(2\sqrt{c_2^2-1}-\sqrt{c_2^2-4}\right) <-2 w_1 b_1.
\end{gather*} 
In this case, the equilibrium $(0,0,\psi_0^+)$ is stable in the truncated system. If, in addition, 
\begin{gather*}
\frac{A a_1   }{ c_2^2}\left(2\sqrt{c_2^2-1}-\sqrt{c_2^2-4}\right)> \frac{b_2}{w_1},
\end{gather*}
 then $\vartheta(\psi_0^+)<0$, and from Theorem~\ref{Th2} it follows that the solution with $E_1(t)\equiv H_1(x_1(t),y_1(t))\approx 0$, $E_2(t)\equiv H_2(x_2(t),y_2(t))\approx A$, and $\varphi_1(t)-\varphi_2(t)\approx \psi_0^+$ is stable in system \eqref{Ex0} (see~Fig.~\ref{FigE1PL}).

\begin{figure}
\centering
{
  \includegraphics[width=0.4\linewidth]{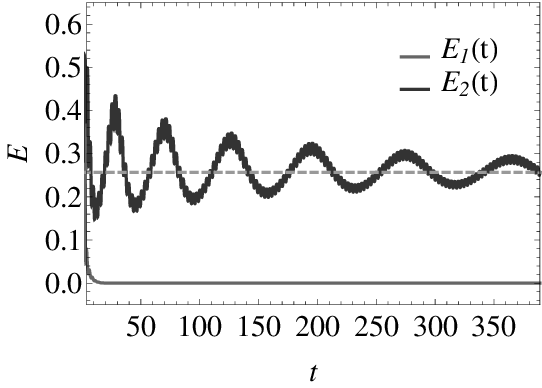}
}
\hspace{1ex}
{
   	\includegraphics[width=0.4\linewidth]{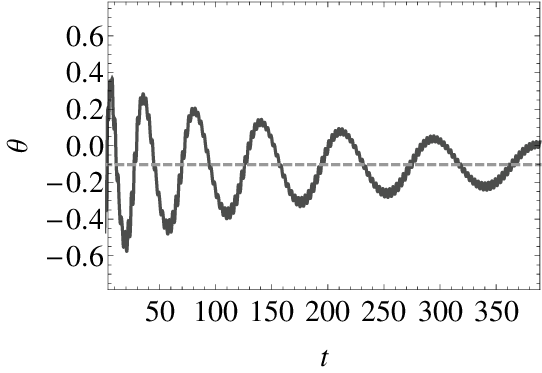}
}
\caption{\small The evolution of $E_1(t)$, $E_2(t)$, and $\theta(t)\equiv \varphi_1(t)-\varphi_2(t)$ for solutions to system \eqref{Ex0} with $U_i(x)\equiv w_i^2 x^2+u_i x^4/4$, right-hand sides \eqref{G1G2}, $u_1=1/4$, $u_2=-0.1$, $w_1=0.98$, $w_2=1$, $a_1=-5$, $b_1=-1$, $a_2=-2.5$, and $b_2=-0.5$. The dashed curves correspond to $E(t)\equiv 0.257$ and $\theta(t)\equiv \psi_0^+$, where $\psi_0^+\approx -0.1$.} \label{FigE1PL}
\end{figure}

If $|c_2|<2$, then assumption \eqref{asl2} holds, and it follows from Theorem~\ref{Th3} that the asymptotic regime with $E_1(t)\approx 0$, $E_2(t)\approx A$, and $\varphi_1(t)-\varphi_2(t)\approx {\hbox{\rm const}}$ does not arise (see~Fig.~\ref{FigE1PD}).

\begin{figure}
\centering
{
  \includegraphics[width=0.4\linewidth]{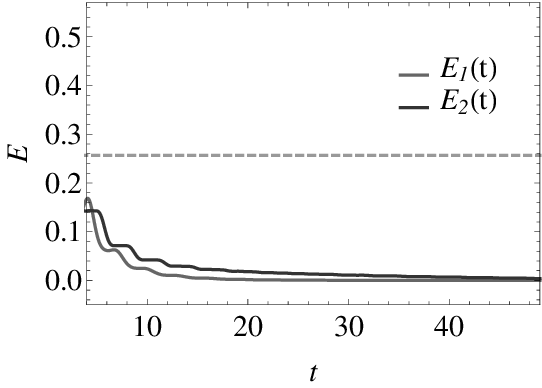}
}
\hspace{1ex}
{
   	\includegraphics[width=0.4\linewidth]{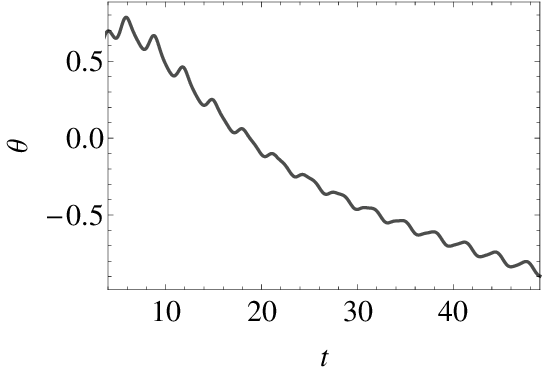}
}
\caption{\small The evolution of $E_1(t)$, $E_2(t)$, and $\theta(t)\equiv \varphi_1(t)-\varphi_2(t)$ for the solutions to system \eqref{Ex0} with $U_i(x)\equiv w_i^2 x^2+u_i x^4/4$, right-hand sides \eqref{G1G2}, $u_1=1/4$, $u_2=-0.1$, $w_1=0.98$, $w_2=1$, $a_1=-5$, $b_1=-1$, $a_2=-0.5$, and $b_2=-1$. The dashed curve corresponds to $E(t)\equiv 0.257$.} \label{FigE1PD}
\end{figure}

2. Consider now the perturbations in the form
\begin{gather*}
\begin{split}
\mathcal G_1(x_1,z_1,x_2,z_2)&\equiv x_1 (a_{1,0}+ a_{1,1} x_2^2+a_{1,2} z_2^2)+z_1 (b_{1,0}+ b_{1,1} x_2^2+b_{1,2} z_2^2), \\
\mathcal G_2(x_1,z_1,x_2,z_2)&\equiv x_2 (a_{2,0} + a_{2,1} x_1^2 + a_{2,2} z_1^2) + z_2 (b_{2,0} + b_{2,1} x_1^2 + b_{2,2} z_1^2)
\end{split}
\end{gather*}
with constant parameters $a_{i,j},b_{i,j}\in\mathbb R$. 
In this case, system \eqref{Ex0} corresponds to a pair of coupled Duffing oscillators with a nonlinear parametric coupling. The change of variables described in Theorem~\ref{Th1} transforms the system into \eqref{rvpp} with $\nu=-\eta_2>0$, $\lambda_{2,1}(\psi)\equiv 0 $,
\begin{align*}
\pi(\psi)& = \frac{b_{1,0}+A(b_{1,1}+  b_{1,2})}{2}-
\frac{A}{4 w_1}\left(w_1 (b_{1,1} -b_{1,2}) \cos 2 \psi  +  (a_{1,1}-a_{1,2}) \sin 2 \psi \right)+\mathcal O(\varepsilon), \\
\lambda(\psi)& =   A b_{2,0} +\frac{\varepsilon A^2 b_{2,0} }{4}+\mathcal O(\varepsilon^2),\\
\lambda_{2,2}(\psi)& =  b_{2,0}+\frac{\varepsilon A b_{2,0} }{2} +\mathcal O(\varepsilon^2), \\
\nu_{2,0}(\psi)& = \frac{w_1a_{2,0}-a_{1,0}- A (a_{1,1}+a_{1,2})}{2 w_1 }+  \frac{A}{4 w_1 }\left( w_1(b_{1,1}  - b_{1,2} )\sin 2 \psi- (a_{1,1}-  a_{1,2}) \cos 2 \psi \right)+\mathcal O(\varepsilon)
\end{align*}
as $\varepsilon\to 0$. If $b_2\neq 0$, then assumption \eqref{asl2} holds, and it follows from Theorem~\ref{Th3} that the asymptotic regime with resonant energy does not occur.

\section{Conclusion}

Thus, the influence of perturbations on the dynamics of a pair of coupled, non-identical oscillators has been investigated. In particular, the model asymptotically autonomous system \eqref{rvpmod} describing the averaged dynamics has been derived. It turns out that this system can be represented as a pendulum-type equation with decaying perturbations
\begin{gather}\label{rpsys}
\frac{d\hat\rho}{d\tau}=\mathfrak F\left(\hat\rho,\hat \psi,\frac{d\hat\psi}{d\tau},\tau\right), \quad 
\frac{d^2\hat \psi}{d\tau^2}-\nu \lambda(\hat \psi)=\mathfrak G\left(\hat \rho,\hat \psi,\frac{d\hat \psi}{d\tau},\tau\right), \quad 
\tau=\frac{2}{2-\alpha} t^{\frac{2-\alpha}{2}}
\end{gather}
where $\mathfrak F(\rho,\psi,u,\tau)$ and $\mathfrak G(\rho,\psi,u,\tau)$ are $2\pi$-periodic with respect to $\psi$ and polynomials in $u$ such that
\begin{gather*}
\mathfrak F(\rho,\psi,u,\tau) = \mathcal O\left(\tau^{-\frac{\alpha}{2-\alpha}}\right), \quad 
\mathfrak G(\rho,\psi,u,\tau) = \mathcal O\left(\tau^{-\frac{\alpha}{2-\alpha}}\right), \quad \tau\to\infty.
\end{gather*}  
The right-hand sides in \eqref{rpsys} depend on the coupling and perturbations of the original oscillatory systems. 
The study of the model system has revealed at least two distinct regimes: phase-locking, when the phases of the oscillators are synchronized, and phase drift, when the phase difference grows unboundedly. We have described conditions on the perturbations that guarantee the existence and stability of resonant solutions with energies close to the resonant values in the phase-locking regime. In contrast, stable resonant solutions are not observed in the phase-drift mode. 

It should be noted that, throughout this paper, we have assumed that the perturbations preserve the equilibrium of the first oscillator. If this condition does not hold, then the proposed theory cannot be applied, since the structure of the model equations and the corresponding phase-locking conditions are violated. This case will be considered separately. We also note that equation \eqref{rpsys} represents a nonautonomous perturbation of a system at a zero-Hopf equilibrium~\cite{GK07}, where the corresponding Jacobian matrix of the limiting system has one zero and a pair of purely imaginary eigenvalues. To the best of the author's knowledge, the influence of decaying perturbations and possible non-autonomous bifurcations in this case have not been studied in detail previously. This topic deserves special attention and will be discussed elsewhere.

\section*{Acknowledgments}
The research is supported by the Russian Science Foundation  (Grant No. 26-11-00021).

}
\end{document}